\newtheorem{theorem}{Theorem}
\newtheorem{lemma}{Lemma}
\newtheorem{proposition}{Proposition}
\newtheorem{corollary}{Corollary}
\colorlet{MyBlue}{DodgerBlue!60!Black}
\colorlet{MyGreen}{DarkGreen!85!Black}
\numberwithin{equation}{section}  
\crefname{app}{Appendix}{Appendices}
\newcommand{\R}{\mathbb{R}}
\newcommand{\N}{\mathbb{N}}
\DeclareMathOperator{\expo}{e}
\DeclareMathOperator{\Var}{Var}
\DeclarePairedDelimiter{\braces}{\{}{\}}
\DeclarePairedDelimiter{\bracks}{[}{]}
\DeclarePairedDelimiter{\parens}{(}{)}
\DeclarePairedDelimiter{\abs}{\lvert}{\rvert}
\DeclarePairedDelimiterX{\braket}[2]{\langle}{\rangle}{#1,#2}
\DeclarePairedDelimiterX{\inner}[2]{\langle}{\rangle}{#1,#2}
\DeclarePairedDelimiterX{\setdef}[2]{\{}{\}}{#1:#2}
\DeclarePairedDelimiterXPP{\probof}[1]{\Prob}{(}{)}{}{%
	
	#1}
\DeclarePairedDelimiterXPP{\exof}[1]{\Expect}{[}{]}{}{%
	
	#1}
\newcommand{\debug}[1]{#1}
\newcommand{\Expect}{\mathsf{\debug E}}
\newcommand{\Prob}{\mathsf{\debug P}}
\newcommand{\proba}{\debug p}
\newcommand{\modproba}{\widetilde{\proba}}
\newcommand{\distr}{\debug F}
\newcommand{\moddistr}{\widetilde{\distr}}
\newcommand{\mgf}{\debug \phi}
\newcommand{\ldrate}{\debug I}
\newcommand{\modldrate}{\widetilde{\ldrate}}
\newcommand{\salgebra}{\mathcal{\debug F}}
\DeclareMathOperator{\Poisson}{\mathsf{\debug{Poisson}}}
\newcommand{\game}{\debug \Gamma}
\newcommand{\play}{\debug i}
\newcommand{\nplayers}{\debug n}
\newcommand{\players}{\bracks{\nplayers}}
\newcommand{\act}{\debug s}
\newcommand{\actprof}{\boldsymbol{\act}}
\newcommand{\actprofalt}{\actprof'}
\newcommand{\actions}{\debug \Sigma}
\newcommand{\pay}{\debug u}
\DeclareMathOperator{\ASU}{\mathsf{ \debug {ASU}}}
\DeclareMathOperator{\BEq}{\mathsf{\debug {Beq}}}
\DeclareMathOperator{\typ}{\mathsf{typ}}
\DeclareMathOperator{\opt}{\mathsf{opt}}
\DeclareMathOperator{\beq}{\mathsf{beq}}
\DeclareMathOperator{\weq}{\mathsf{weq}}
\DeclareMathOperator{\NE}{\mathsf{\debug {NE}}}
\DeclareMathOperator{\PoA}{\mathsf{\debug {PoA}}}
\DeclareMathOperator{\SO}{\mathsf{\debug {SO}}}
\DeclareMathOperator{\PoS}{\mathsf{\debug {PoS}}}
\DeclareMathOperator{\WEq}{\mathsf{\debug {Weq}}}
\newcommand{\diff}{\, \textup{d}}
\newcommand{\ind}{\mathbf{1}}
\newcommand{\cG}{\ensuremath{\mathcal G}}
\newcommand{\cN}{\ensuremath{\mathcal N}}
\newacro{NE}{Nash equilibrium}
\newacro{PNE}{pure Nash equilibrium}
\newacro{MNE}{mixed Nash equilibrium}
\newacro{PFNE}{prior-free Nash equilibrium}
\newacro{WE}{Wardrop equilibrium}
\newacro{SO}{socially optimum}
\newacro{SU}{social utility}
\newacro{ASU}{average social utility}
\newacro{BEq}{best equilibrium}
\newacro{WEq}{worst equilibrium}
\newacro{KKT}{Karush\textendash Kuhn\textendash Tucker}
\newacro{OD}[O/D]{origin-destination}
\newacro{PoA}{price of anarchy}
\newacro{PoS}{price of stability}
\newacro{PoCS}{price of correlated stability}
\newacro{BPR}{bureau of public roads}
\newacro{FIP}{finite improvement property}
\newacro{CLT}{central limit theorem}
\newacro{BPG}{buck-passing game}
\newacro{SBPG}{stochastic buck-passing game}
\newacro{MBPG}{mixed extension of the buck-passing game}
\title[Efficiency of equilibria in games with random payoffs]{Efficiency of equilibria in games with random payoffs}
\author[M.~Quattropani]{Matteo Quattropani$^{\dagger}$}
\address{$^{\dagger}$ Dipartimento di Economia e Finanza, LUISS, Viale Romania 32, 00197 Roma, Italy.}
\email{mquattropani@luiss.it}
\author[M.~Scarsini]{Marco Scarsini$^{\#}$}
\address{$^{\#}$ Dipartimento di Economia e Finanza, LUISS, Viale Romania 32, 00197 Roma, Italy.}
\email{marco.scarsini@luiss.it}
\begin{document}
	
	\maketitle

\begin{abstract}
	We consider normal-form games with $\nplayers$ players and two strategies for each player, where the payoffs are i.i.d. random variables with some distribution $\distr$ and we consider issues related to the pure equilibria in the game as the number of players diverges. 
	It is well-known that, if the distribution $\distr$ has no atoms, the random number of pure equilibria is asymptotically $\Poisson(1)$. 
	In the presence of atoms, it diverges. 
	For each strategy profile, we consider the (random) average payoff of the players, called \acl{ASU}.
	In particular, we examine the asymptotic behavior of the optimum \acl{ASU} and the one associated to the best and worst \acl{PNE} and we show that, although these quantities are random, they converge, as $\nplayers\to\infty$ to some deterministic quantities.	
\end{abstract}

	\section{Introduction}
	\label{se:intro}
	%
	%

	The concept of \ac{NE} is central in game theory. 
	\citet{Nash:PNAS1950,Nash:AM1951} proved that every finite game admits \acp{MNE}, but, in general, \acp{PNE} may fail to exist.
	The concept of \acl{PNE} is epistemically better understood than the one of \ac{MNE}, so it is important to understand how rare are games without \acp{PNE}. 
	One way to address the problem is to consider games with random  payoffs.
	In  a random game the number of \acp{PNE} is also a random variable, whose distribution  depends on the assumptions about the distribution of the random payoffs. 
	The simplest case that has been considered in the literature deals with i.i.d.\ payoffs having a continuous distribution function.
	This implies that ties among payoffs have zero probability. 
	Even in this simple case, although it is easy to compute the expected number of \acp{PNE}, the characterization of their exact distribution is non-trivial. 
	Asymptotic results exist as either the number of players or the number of strategies for each player diverges. 
	In both cases the number of \acp{PNE} converges to a Poisson distribution with parameter $1$.
	Generalizations of the simple case have been considered, for instance either by removing the assumptions that all payoffs are independent or by allowing for discontinuities in their distribution functions. 
	In both cases the number of \acp{PNE} diverges and some \ac{CLT} holds. 
	
	The literature on games with random payoffs has focused on the distribution of the number of \acp{PNE}. 
	To the best of our knowledge, the distribution of their \ac{ASU}, i.e., the average payoff of each player has never been studied.
	
	The (in)efficiency of equilibria is a central topic in algorithmic game theory.
	In the case of positive payoffs, one way to measure this inefficiency is through the \ac{PoA}, i.e., the ratio of the optimal \ac{ASU} over the \ac{ASU} of the worst \ac{NE}, or through the \ac{PoS}, i.e., the ratio of the optimal \ac{ASU} over the \ac{ASU} of the best \ac{NE}.
	To analyze the efficiency of equilibria in games with random payoffs, it is then important to study the behavior of the random optimal \ac{ASU} and the \ac{ASU} of the best and worst \ac{PNE}.
	
	%
	
	\subsection{Our contribution}
	
	The goal of this paper is to study the asymptotic behavior of the \ac{ASU} of the optimum and of the \ac{PNE} of games with random payoffs when the number of players increases. 
	We consider normal-form games with $\nplayers$ players and two strategies for each player, where payoffs are assumed to be i.i.d.\ random variables having common distribution $\distr$. 
	We first show that the optimal \ac{ASU} converges in probability to a deterministic value that can be characterized in terms of the large deviation rate of $\distr$.
	
	Then we move to examine the asymptotic \ac{ASU} of the \acp{PNE}.
	We start by considering the case in which $\distr$ has no atoms. In this case, as shown in \citet{RinSca:GEB2000}, asymptotically the number of \ac{PNE} has a Poisson distribution with mean $1$.
	This implies that we typically do not have many equilibria.
	We will show that, when equilibria exist, in the limit they all have the same \ac{ASU}.
	
	We then consider the case in which $\distr$ has some atoms, so that the number of equilibria grows exponentially in probability, as show in \citet{AmiColSca:arXiv2019}. 
	We will show that asymptotically all but a vanishingly small fraction of equilibria share the same \ac{ASU}. 
	On the other hand we show that the \ac{ASU} of the best and the worst equilibrium, converge in probability to two values that depend on $\distr$.
	These values can be characterized by means of a large deviation estimate of a modified distribution $\moddistr$ that depends on $\distr$ in an explicit way. 
	In particular, given a random variable $X$ with distribution $\distr$, $\moddistr$ is the distribution function of $X$ conditionally on $X$ being larger or equal than an independent copy $X'$.
	So, even if most \ac{PNE} have the same asymptotic \ac{ASU}, the \ac{ASU} of the worst and best \ac{PNE} can be quite different. 
	
	Finally, we consider the special case where $\distr$ is a Bernoulli distribution with parameter $\proba$.
	We analyze the limit behavior of the \ac{ASU} of the optimum, the best, and worst \ac{PNE}, as a function of $\proba$ and study these functions.

	\subsection{Related literature}
	
	The distribution of the number of \ac{PNE} in games with random payoffs has been studied for a number of years. 
	Many papers assume the random payoffs to be i.i.d.\ from a continuous distribution and study the asymptotic behavior of random games, as the number of strategies grows. 
	For instance, \citet{Gol:AMM1957}  showed that in zero-sum two-person games  the probability of having a \ac{PNE} goes to zero.
	He also briefly dealt with the case of payoffs having a Bernoulli distribution.
	\citet{GolGolNew:JRNBSB1968} studied general two-person games and showed that the probability of having at least one \ac{PNE} converges to $1-\expo^{-1}$.
	\citet{Dre:JCT1970} generalized this result to the case of an arbitrary finite number of players.
	Other papers have looked at the asymptotic distribution of the number of \ac{PNE}, again when the number of strategies diverges.
	\citet{Pow:IJGT1990} showed that, when the number of strategies of at least two players goes to infinity, the distribution of the number of \ac{PNE} converges to a $\Poisson(1)$.
	She then compared the case of continuous and discontinuous distributions. 
	\citet{Sta:GEB1995} derived an exact formula for the distribution of \ac{PNE} in random games and obtained the result in \citet{Pow:IJGT1990} as a corollary.
	\citet{Sta:MOR1996} dealt with the case of two-person symmetric games and obtained Poisson convergence for the number of both symmetric and asymmetric \ac{PNE}.
	
	In all the games with a continuous distribution of payoffs, the expected number of \ac{PNE} is in fact $1$ for any fixed $\nplayers$.
	Under different hypotheses, this expected number diverges. 
	For instance, \citet{Sta:MSS1997,Sta:EL1999} showed that this is the case for games with vector payoffs and for games of common interest, respectively.
	\citet{RinSca:GEB2000} weakened the hypothesis of i.i.d.\ payoffs; that is, they assumed that  payoff vectors corresponding to different strategy profiles are i.i.d., but they allowed some dependence within the same payoff vector. 
	In this setting, they proved asymptotic results when either the number of players or the number of strategies diverges. 
	More precisely, if each payoff vector has a multinormal exchangeable distribution with correlation coefficient $\rho$, then, if $\rho$ is positive,  the number of \ac{PNE} diverges and a central limit theorem holds.
	\citet{Rai:P7YSM2003} used Chen-Stein method to bound the distance between the distribution of the normalized number of \acp{PNE} and a normal distribution. His result is very general, since it does not assume continuity of the payoff distributions. 
	\citet{Tak:GEB2008} considered the distribution of the number of \ac{PNE} in a random game with two players, conditionally  on the game having nondecreasing best-response functions.
	This assumption greatly increases the expected number of \ac{PNE}.
	\citet{DasDimMos:AAP2011} extended the framework of games with random payoffs to graphical games. Players are vertices of a graph and their strategies are binary, like in our model. Moreover, their payoff depends only on their strategy and the strategies of their neighbors.
	The authors studied how the structure of the graph affects existence of \ac{PNE} and they examined both deterministic and random graphs.
	\citet{AmiColSca:arXiv2019} showed that in games with $\nplayers$ players and two actions for each player, the key quantity that determines the behavior of the number of \acp{PNE} is the probability that two different payoffs assume the same value. 
	They then studied the behavior of best-response dynamics in random games.
	\citet{AmiColHam:ORL2021} compared the asymptotic behavior of best-response and better-response dynamics in two-person games with random payoffs with a continuous distribution, as the number of strategies diverges. 
	Properties of learning procedures in games with random payoffs have been studied by \citet{DurGau:AGT2016,GalFar:PNAS2013,PanHeiFar:SA2019,HeiJanMunPanScoTarWie:arxiv2021}.
	
	The issue of solution concepts in games with random payoffs has been explored by various authors in different directions.
	For instance, \citet{Coh:PNAS1998} studied the probability that Nash equilibria (both pure and mixed) in a finite random game maximize the sum of the players' payoffs.
	This bears some relation with what we do in this paper.
	
	The fact that selfish behavior of agents produces inefficiencies goes back at least to \citet{Pig:Macmillan1920} and has been studied in various fashions in the economic literature.
	Measuring inefficiency of equilibria in games has attracted the interest of the algorithmic-game-theory community around the change of the millennium.
	Efficiency of equilibria is typically measured using either the \ac{PoA} or the \ac{PoS}. 
	The \ac{PoA}, i.e., the ratio of the optimum \ac{SU} over the \ac{SU} of the worst equilibrium, was introduced by \citet{KouPap:STACS1999} and given this name by  \citet{Pap:ACMSTC2001}.
	The \ac{PoS}, i.e., the ratio of the optimum \ac{SU} over the \ac{SU} of the best equilibrium, was introduced by \citet{SchSti:P14SIAM2003} and given this name by  \citet{AnsDasKleTarWexRou:SIAMJC2008}.
	The reader is referred for instance to \citet{RouTar:inCUPress2007} for the basic concepts related to inefficiency of equilibria.

	\subsection{Organization of the paper}\label{suse:organization}
	\cref{sec:model} introduces the model and the main results. 
	\cref{sec:SO} shows the convergence in probability of the \ac{SO}, regardless the existence of atoms in the distribution of the payoffs. 
	In \cref{sec:first-mom} we provide an operative expression for the expected number of equilibria having \ac{ASU} exceeding a given threshold.
	\cref{sec:noties,sec:ties} are devoted to the proof of the main results for the model with no ties and with ties, respectively. 
	Finally, in \cref{sec:binary}, we study a specific instance of the model, in which the distribution $\distr$ is Bernoulli, providing more explicit results.

	%
	%
	\section{Model and main results}
	\label{sec:model}
	
	Throughout the paper we adopt the usual asymptotic notation. More precisely, for any two real sequences $(f_\nplayers)_{\nplayers\in\N}$ and $(g_\nplayers)_{\nplayers\in\N}$
	\begin{equation*}
	f_\nplayers=O(g_\nplayers)\iff\lim_{\nplayers\to\infty}\frac{f_\nplayers}{g_\nplayers}<\infty,\quad\text{and}\quad f_\nplayers=o(g_\nplayers)\iff\lim_{\nplayers\to\infty}\frac{f_\nplayers}{g_\nplayers}=0.
	\end{equation*}
	For a set $A$, the symbol $\abs{A}$ will denote its cardinality.	In the background we will always have a probability space $\parens*{\Omega,\salgebra, \Prob}$ on which all the random quantities are defined. 
	In particular, we will consider a sequence $\parens*{\game_{\nplayers}}_{\nplayers\in\N}$ of normal form games where $\game_{\nplayers}$ has  $\nplayers$ players.
	The set of players of game $\game_{\nplayers}$ is denoted by $\players$.
	Each player can choose one strategy in $\braces{0,1}$; then, the set $\actions_{\nplayers}$  of strategy profiles is the Cartesian product $\braces{0,1}^{\nplayers}$. As in \citet{DasDimMos:AAP2011}, the symbol $\oplus$ will denote the binary XOR operator, defined as 
	\begin{equation}
	\label{eq:XOR}
	1 \oplus 0=0 \oplus 1 = 1,\quad 0\oplus 0 = 1 \oplus 1 = 0.
	\end{equation}
	Therefore, $\oplus$-adding $1$ changes one strategy into the other. 
	
	For $\play\in[\nplayers]$, $\pay_{\play} \colon \actions_{\nplayers} \to \R$ denotes player $\play$'s payoff function. 
	We further assume that all payoffs of all games are i.i.d. random variables with the same marginal distribution $\distr$, i.e.,
	\begin{equation}
	\label{eq:binary-payoff}
	\Prob(\pay_\play(\actprof)\le x)=\distr(x),\quad\forall\nplayers\in\N,\:\forall\play\in\players,\:\forall \actprof\in\actions_{\nplayers},\:\forall x\in\R. 
	\end{equation}
	A strategy profile $\actprof\in\actions_{\nplayers}$ is a \acl{PNE} if
	\begin{equation*}
	\forall \play\in[\nplayers],\qquad u_{\play}(\actprof)\ge u_{\play}((\actprof_{-\play},\act_{\play}\oplus 1)),
	\end{equation*}
	where $\actprof_{-\play}$ is the subprofiles of the strategies of all players except $\play$.
	Let $\NE(\game_{\nplayers})$ denote the set of \aclp{PNE}.
	We will be interested in the asymptotic behavior, as $\nplayers\to\infty$, of the following quantities:
	\begin{align}
	&\label{eq:SU}
	\text{\acfi{ASU}\acused{ASU} } &  \ASU(\game_{\nplayers},\actprof) &\coloneqq \frac{1}{\nplayers}\sum_{\play\in[\nplayers]}\pay_{\play}(\actprof),\\
	&\text{\acfi{SO}\acused{SO} } & \SO(\game_{\nplayers})&\coloneqq \max_{\actprof\in\actions}\ASU(\actprof),\\
	&\text{\acfi{BEq}\acused{BEq} } & \BEq(\game_{\nplayers}) &\coloneqq \max_{\actprof\in\NE}\ASU(\actprof),\\
	&\text{\acfi{WEq}\acused{WEq} } & \WEq(\game_{\nplayers}) &\coloneqq \min_{\actprof\in\NE}\ASU(\actprof).
	\end{align}
	We say that a sequence of real random variables $(U_{\nplayers})_{\nplayers\in\N}$  \emph{converges in probability to} $c\in\R$  (denoted by $U_{\nplayers}\overset{\Prob}{\longrightarrow}c$), if
	\begin{equation}
	\forall\varepsilon>0,\quad\lim_{\nplayers\to\infty}\Prob\big(\big|U_{\nplayers}-c\big|<\varepsilon \big)=1.
	\end{equation}
	Given the payoff distribution $\distr$, and two independent random variables $X,X'\sim \distr$, let
	\begin{equation*}
	\alpha\coloneqq\Prob(X=X'),\quad \beta\coloneqq\frac{1-\alpha}{2}=\Prob(X>X').
	\end{equation*}
	Notice that if $\distr$ is continuous then $\alpha=0$ and $\beta=1/2$. 
	In general, $\distr$ can be decomposed into a continuous and an atomic part. 
	Call $L$ the set of atoms of $\distr$. 
	Then
	\begin{equation}
	\label{eq:alphas}
	\alpha=\sum_{\ell\in L}\alpha_\ell, \quad\text{with}\quad\alpha_\ell\coloneqq\Prob(X=\ell)^2.
	\end{equation}
	%
	For the sake of simplicity, we assume the existence of a density function $f:\R\to\R_+$ such that 
	\begin{equation*}
	\distr(x)=\parens*{1-\sum_{\ell\in L}\sqrt{\alpha_\ell}}\int_{-\infty}^{x}f(y)dy+\sum_{\ell\in L}\sqrt{\alpha_\ell}\ind_{\ell\le x}.
	\end{equation*}
	We will further assume that the distribution $\distr$ has exponential moments of all order, i.e., if $X\sim \distr$, 
	\begin{equation}\label{eq:hp-mgf}
	\mgf(t)\coloneqq\Expect[e^{tX}]<\infty,\quad\forall t\in\R.
	\end{equation}
	In other words, the moment generating function of $\distr$ is everywhere finite.
	Notice that, for fixed $\nplayers$, the collection of  random variables $(\ASU(\game_{\nplayers},\actprof))_{\actprof\in\actions_{\nplayers}}$ is  i.i.d. and 
	the random variable $\SO(\game_{\nplayers})$  is the maximum of $2^{\nplayers}$ independent random variables with common distribution. 
	Therefore, its behavior can be analyzed using classical tools in extreme value theory.
	In fact, for all possible $\alpha$, our analysis of $\SO(\game_{\nplayers})$ relies on the study of the large deviations of the random variable
	\begin{equation*}
	\frac{1}{\nplayers}\sum_{\play=1}^{\nplayers} X_{\play},\quad X_{\play}\sim \distr,\:\text{i.i.d.}.
	\end{equation*} 
	If we define the large deviation rate
	\begin{equation}
	\label{eq:def-rate}
	\ldrate(x)\coloneqq\sup_{t\in\R}\bracks*{xt-\log\big(\phi(t) \big) },
	\end{equation}			
	then, under the assumption in \cref{eq:hp-mgf}, from Cramer's large deviation theorem (see, e.g., \citet[Theorem~1.4]{den:AMS2000}), it follows that, 
	\begin{equation*}
	\forall x> \Expect[X],\quad \lim_{\nplayers\to\infty}\frac{1}{\nplayers}
	\log\parens*{\Prob\parens*{\frac{1}{\nplayers}\sum_{\play=1}^{\nplayers} X_\play\ge x}}=-\ldrate(x).
	\end{equation*}
	Moreover, the function $\ldrate$ is lower semi-continuous and convex on $\R$, and
	\begin{equation*}
	\ldrate(x)\ge0\quad\text{and}\quad \ldrate(x)=0\iff x=\Expect[X],
	\end{equation*}
	\citep[see][Lemma 1.14]{den:AMS2000}.
	
	The following proposition describes the asymptotic behavior of the \ac{SO}. 
	Notice that \cref{pr:SO} does not require any assumption about $\alpha$, i.e., the asymptotic  behavior of the \ac{SO} does not depend on the presence of atoms in $\distr$.
	\begin{proposition}
		\label{pr:SO}
		Let $\distr$ satisfy \cref{eq:hp-mgf} and let $\ldrate$  be its large deviation rate. Then
		\begin{equation}
		\SO(\game_{\nplayers})\overset{\Prob}{\longrightarrow} x_{\opt},
		\end{equation}
		where
		\begin{equation}\label{eq:def-xopt}
		x_{\opt}\coloneqq\inf\left\{x>\Expect[X]\:\big\rvert\: \ldrate(x)> \log(2)  \right\}.
		\end{equation}			
	\end{proposition}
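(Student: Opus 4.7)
The plan is to recast $\SO(\game_\nplayers)$ as an extreme-value problem and then invoke Cram\'er's theorem at the natural threshold. Writing $Y_\actprof \coloneqq \ASU(\game_\nplayers,\actprof)$ for $\actprof\in\actions_\nplayers$, the i.i.d.\ assumption on the payoffs $\pay_\play(\actprof)$ across both $\play$ and $\actprof$ makes $(Y_\actprof)_{\actprof\in\actions_\nplayers}$ a family of $2^\nplayers$ \emph{independent} random variables, each distributed as the sample mean of $\nplayers$ i.i.d.\ copies of $X\sim\distr$. Hence $\SO(\game_\nplayers) = \max_\actprof Y_\actprof$ is the maximum of $2^\nplayers$ such sample means, and the value $x_\opt$ is precisely where the LDP rate $\ldrate$ crosses the exponential growth rate $\log 2=\frac{1}{\nplayers}\log 2^\nplayers$. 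So the strategy is a standard ``extreme value meets large deviations'' sandwich, made quantitative by \cref{eq:hp-mgf}.

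For the upper bound, fix $\eps>0$. Since $\ldrate$ is nondecreasing on $[\Expect[X],\infty)$ and $x_\opt$ is the infimum of $\{x>\Expect[X]:\ldrate(x)>\log 2\}$, there is some $y\in(x_\opt,x_\opt+\eps)$ with $\ldrate(y)>\log 2$, hence $\ldrate(x_\opt+\eps)>\log 2$. Let $\delta\coloneqq\ldrate(x_\opt+\eps)-\log 2>0$. Cram\'er's upper tail bound gives, for $\nplayers$ large, $\Prob(Y_\actprof\ge x_\opt+\eps)\le \exp(-\nplayers(\log 2+\delta/2))$, and a union bound over the $2^\nplayers$ profiles yields $\Prob(\SO(\game_\nplayers)\ge x_\opt+\eps)\le \exp(-\nplayers\delta/2)\to 0$.

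For the lower bound, fix $\eps\in(0,x_\opt-\Expect[X])$. The main subtlety is that I need the \emph{strict} inequality $\ldrate(x_\opt-\eps)<\log 2$. This follows because \cref{eq:hp-mgf} makes $\log\mgf$ infinitely differentiable and strictly convex (as $\distr$ is non-degenerate), so its Legendre transform $\ldrate$ is smooth and strictly convex on the interior of its effective domain; combined with $\ldrate(\Expect[X])=0$ and $\ldrate(x_\opt)=\log 2$ (the latter obtained by noting that $\ldrate(x_\opt)>\log 2$ would contradict the infimum by lower semicontinuity, while $\ldrate(x_\opt)<\log 2$ would contradict it by continuity from the right), this forces $\ldrate$ to be strictly increasing on $[\Expect[X],x_\opt]$. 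Letting $N_\nplayers\coloneqq\abs{\{\actprof\in\actions_\nplayers: Y_\actprof\ge x_\opt-\eps\}}$ and $\delta'\coloneqq\log 2-\ldrate(x_\opt-\eps)>0$, Cram\'er's lower tail bound gives $\Expect[N_\nplayers]\ge 2^\nplayers \exp(-\nplayers(\ldrate(x_\opt-\eps)+\delta'/2))=\exp(\nplayers\delta'/2)\to\infty$. Since $N_\nplayers$ is a sum of $2^\nplayers$ independent Bernoullis, $\Var(N_\nplayers)\le \Expect[N_\nplayers]$, so Chebyshev's inequality gives $\Prob(\SO(\game_\nplayers)<x_\opt-\eps)=\Prob(N_\nplayers=0)\le 1/\Expect[N_\nplayers]\to 0$.

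Combining the two estimates yields $\SO(\game_\nplayers)\overset{\Prob}{\longrightarrow} x_\opt$. The principal obstacle is the strict-monotonicity step in the lower bound: one must rule out a plateau of $\ldrate$ at level $\log 2$ on a left neighborhood of $x_\opt$, since a plateau of length $\eta$ there would only let the second-moment argument certify $\SO(\game_\nplayers)\ge x_\opt-\eta$, not the claimed $x_\opt-\eps$ for arbitrary $\eps$. This is where the full strength of \cref{eq:hp-mgf}, together with the non-degeneracy of $\distr$, really matters.
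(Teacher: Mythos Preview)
Your proof is correct and follows essentially the same route as the paper: a first-moment (union-bound/Markov) argument for the upper deviation and a second-moment argument for the lower deviation, exploiting the independence of the $2^\nplayers$ sample means. The paper phrases the lower bound via the Paley--Zygmund inequality on $|W^+_{x_\opt-\eps}|$ whereas you use Chebyshev on the same binomial count, but these are the same computation; if anything, you are more explicit than the paper in justifying the strict inequality $\ldrate(x_\opt-\eps)<\log 2$, which the paper asserts directly from the definition of $x_\opt$.
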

	
	The intuition underlying \cref{pr:SO} is that a deviation from the typical \ac{ASU}, i.e., $\Expect[X]$, is exponentially rare. In particular, given $ x>\Expect[X]$, the probability to have an \ac{ASU} larger or equal than $x$ is roughly $\exp(-\ldrate(x)\cdot\nplayers)$. Since the number of strategy profiles is $2^{\nplayers}$ as soon as $\ldrate(x)>\log(2)$ the expected number of strategy profiles with \ac{ASU} larger than $x$ tends to zero.
	
	We now consider the asymptotic behavior of the best and worst  \ac{PNE}.
	This analysis is more complicated than what we had for the \ac{SO}. In fact, the procedure requires an optimization of the random set of pure Nash equilibria. 
	We will distinguish between the case where the payoffs have no ties ($\alpha=0$)  and when they can have ties ($\alpha>0$). 
	\citet{RinSca:GEB2000} showed that, when $\alpha=0$,
	\begin{equation}
	\forall k\in\N, \quad\lim_{\nplayers\to\infty}\Prob(\abs*{\NE(\game_{\nplayers})}=k)=\frac{e^{-1}}{k!}.
	\end{equation}
	That is, if there are no ties, the number of pure equilibria converges weakly to a Poisson distribution of parameter $1$. 
	This implies that typically the equilibria are not numerous and, with positive probability, the set of equilibria can even be empty. 
	In this scenario, we use a first moment argument to show that, if pure equilibria exist, then  asymptotically they all share the same \ac{ASU}. 
	
	The results about the asymptotic behavior of \ac{BEq} and \ac{WEq} will require the following definition.
	For $X,X'\sim\distr$, let
	\begin{equation}
	\label{eq:def-Ftilde}
	\moddistr(y)\coloneqq
	\Prob\parens*{X \le y \mid X\ge X'}
	=\frac{2}{1+\alpha}\bracks*{\distr^2(y)-\int_{(-\infty,y]}\distr(x^-)\diff \distr(x)}.
	\end{equation}
	Notice that, when $\alpha=0$, we have 
	\begin{equation}\label{eq:def-ftildenoties}
	\moddistr(y)= \distr(y)^2.
	\end{equation}
	
	The following theorem deals with the asymptotic behavior of the class of  \acp{PNE}.
	\begin{theorem}[Convergence (no ties)]
		\label{th:convergence-noties}
		Let $\distr$ be such that $\alpha=0$ and let
		\begin{equation}
		\label{eq:def-xtyp-noties}
		x_{\typ}=\Expect[Y],\quad\text{with}\quad Y\sim\moddistr.
		\end{equation}
		Then
		\begin{equation}\label{eq:result-noties}
		\ind_{\NE(\game_{\nplayers})\neq\varnothing}\max_{\actprof\in\NE(\game_{\nplayers}) } \abs*{\ASU(\game_{\nplayers},\actprof)-x_{\typ}}\overset{\Prob}{\longrightarrow}0.
		\end{equation}
	\end{theorem}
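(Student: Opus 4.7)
The plan is a first-moment argument on the random number of pure Nash equilibria whose \ac{ASU} deviates from $x_{\typ}$ by more than a prescribed threshold. For $\varepsilon>0$, set
\[
N_{\nplayers}(\varepsilon)\coloneqq\abs*{\braces*{\actprof\in\NE(\game_{\nplayers})\::\:\abs*{\ASU(\game_{\nplayers},\actprof)-x_{\typ}}>\varepsilon}}.
\]
The event appearing in \eqref{eq:result-noties} (at the scale $\varepsilon$) coincides with $\{N_{\nplayers}(\varepsilon)\ge1\}$, so by Markov's inequality it suffices to prove that $\Expect[N_{\nplayers}(\varepsilon)]\to 0$; this fits the expected-count template developed in \cref{sec:first-mom}.

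By exchangeability across strategy profiles, for any fixed $\actprof_0\in\actions_{\nplayers}$
\[
\Expect[N_{\nplayers}(\varepsilon)]=2^{\nplayers}\,\Prob\parens*{\actprof_0\in\NE(\game_{\nplayers}),\ \abs*{\ASU(\game_{\nplayers},\actprof_0)-x_{\typ}}>\varepsilon}.
\]
The structural observation that drives the argument is that a single-coordinate flip of $\actprof_0$ yields a genuinely different profile, so the $2\nplayers$ payoff variables $\pay_{\play}(\actprof_0)$ and $\pay_{\play}((\actprof_{0,-\play},\act_{0,\play}\oplus 1))$, $\play\in[\nplayers]$, are distinct, hence i.i.d.\ copies of $X\sim\distr$. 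Viewed as $\nplayers$ independent pairs $(X_{\play},X_{\play}')$, the event $\{\actprof_0\in\NE\}$ factorizes as the intersection of the independent events $\{X_{\play}>X_{\play}'\}$ (where the absence of ties $\alpha=0$ allows me to write strict inequalities), each of probability $1/2$. Hence $\Prob(\actprof_0\in\NE)=2^{-\nplayers}$, and conditionally on $\{\actprof_0\in\NE\}$ the payoffs $\pay_{\play}(\actprof_0)$ are i.i.d.\ with common law equal to the conditional distribution of $X$ given $X>X'$, which is $\moddistr$ by \eqref{eq:def-ftildenoties}.

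Combining these observations, the prefactor $2^{\nplayers}$ exactly cancels $\Prob(\actprof_0\in\NE)$, so
\[
\Expect[N_{\nplayers}(\varepsilon)]=\Prob\parens*{\abs*{\frac{1}{\nplayers}\sum_{\play=1}^{\nplayers}Y_{\play}-x_{\typ}}>\varepsilon},
\]
where $(Y_{\play})_{\play\in[\nplayers]}$ are i.i.d.\ $\sim\moddistr$ and $x_{\typ}=\Expect[Y_1]$. The density of $\moddistr$ is at most $2f$, so its moment generating function is at most $2\mgf$ and in particular everywhere finite by \eqref{eq:hp-mgf}; Cramér's theorem (or even just the weak law of large numbers) then drives the right-hand side to zero, in fact exponentially fast in $\nplayers$. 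I expect the only delicate point to be the conditional-independence observation in the second paragraph---namely, that simultaneously conditioning on the $\nplayers$ equilibrium inequalities does not couple the players---because it hinges on the fact that each player's best-response inequality involves a disjoint pair of payoff coordinates. Once this is spelled out, the remaining steps reduce to a routine moment generating function bound and a standard concentration estimate.
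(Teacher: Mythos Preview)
Your proposal is correct and follows essentially the same approach as the paper: a first-moment (Markov) argument showing that the expected number of pure equilibria whose \ac{ASU} deviates from $x_{\typ}$ vanishes, via the factorization $\Prob(\actprof_0\in\NE)=2^{-\nplayers}$ and the conditional i.i.d.\ $\moddistr$ structure, followed by the law of large numbers. The paper packages the first-moment computation as \cref{le:1st-mom} and splits the deviation into the two one-sided sets $Z_{x_{\typ}+\varepsilon}^+$ and $Z_{x_{\typ}-\varepsilon}^-$, but the argument is otherwise identical to yours.
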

	
	Conversely, when $\alpha>0$, we know by \citet{AmiColSca:arXiv2019} that the number of pure Nash equilibria is exponentially large in the number of players. 
	In particular,
	\begin{equation}\label{eq:amiet1}
	\frac{1}{\nplayers}\log\abs{\NE(\game_{\nplayers})}\overset{\Prob}{\longrightarrow}\log\parens*{1+\alpha}.
	\end{equation}
	
	We cannot analyze the asymptotic behavior of \ac{BEq} and \ac{WEq} along the lines of \cref{pr:SO} because of the stochastic dependence of the payoffs corresponding to different \acp{PNE}.
	To be more precise, if $\actprof$ is a \ac{PNE}, and $\actprofalt$ is a neighbor of $\actprof$, i.e., the two profiles differ only in one coordinate, say $\play$, then $u_{\play}(\actprofalt)$ is not independent of $u_{\play}(\actprof)$.
	The proof of the following theorem it will be based on the fact that this dependence, although present, is weak.
	
	\begin{theorem}[Convergence (with ties)]\label{th:convergence-ties}
		Let $\distr$ be such that $\alpha>0$.
		Let $x_{\typ}$ be defined as in \cref{eq:def-xtyp-noties} and let $\modldrate$ be the  large deviation rate associated to $\moddistr$. 
		\begin{enumerate}[\upshape(i)]
			\item 
			\label{it:th:convergence-ties-1}
			If
			\begin{align}\label{eq:def-xbeq}
			&x_{\beq}\coloneqq\inf\left\{x>x_{\typ}\::\: \modldrate(x)>\log(1+\alpha) \right\},\\
			\label{eq:def-xweq}&x_{\weq}\coloneqq\sup\left\{x<x_{\typ}\::\: \modldrate(x)<\log(1+\alpha) \right\},
			\end{align}
			then
			\begin{equation}\label{eq:result-ties}
			\big(\BEq(\game_{\nplayers}),\WEq(\game_{\nplayers}) \big)\overset{\Prob}{\longrightarrow}\big(x_{\beq},x_{\weq} \big).
			\end{equation}
			
			\item 
			\label{it:th:convergence-ties-2}			
			If
			\begin{equation}\label{eq:thm3-2}
			\NE_{\typ,\varepsilon}(\game_{\nplayers})\coloneqq\left\{\actprof\in\NE(\game_{\nplayers})\::\:\big|\ASU(\game_{\nplayers},\actprof)-x_{\typ} \big| < \varepsilon\right\},
			\end{equation}
			then
			\begin{equation}\label{eq:result-typ}
			\forall\varepsilon>0,\quad\frac{ |\NE_{\typ,\varepsilon}(\game_{\nplayers})|}{ |\NE(\game_{\nplayers})|}\overset{\Prob}{\longrightarrow}1.
			\end{equation}
		\end{enumerate}
	\end{theorem}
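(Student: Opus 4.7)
The plan is to combine a first moment computation for the count of ``good'' pure Nash equilibria with a second moment argument. The single observation that drives everything is that the event $\{\actprof\in\NE(\game_{\nplayers})\}$ factorizes into $\nplayers$ independent one-player constraints $\{u_\play(\actprof)\ge u_\play(\actprof_{-\play},\act_\play\oplus 1)\}$, so that conditionally on $\actprof\in\NE(\game_\nplayers)$ the random variables $(u_\play(\actprof))_{\play\in[\nplayers]}$ are independent with common marginal $\moddistr$ (cf.\ \cref{eq:def-Ftilde}). Since $\modmgf(t)\le 2\mgf(t)/(1+\alpha)$, the law $\moddistr$ inherits finite exponential moments from \cref{eq:hp-mgf}, and Cramer's theorem applied to $\moddistr$ yields, for every $x>x_\typ$,
\[
\Prob\big(\ASU(\game_\nplayers,\actprof)\ge x\,\big|\,\actprof\in\NE(\game_\nplayers)\big)=e^{-\nplayers\modldrate(x)+o(\nplayers)},
\]
together with the analogous lower-tail estimate for $x<x_\typ$. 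Multiplying by $|\actions_\nplayers|\cdot\Prob(\actprof\in\NE(\game_\nplayers))=(1+\alpha)^\nplayers$ gives, with $Z_\nplayers(x)\coloneqq|\{\actprof\in\NE(\game_\nplayers):\ASU(\game_\nplayers,\actprof)\ge x\}|$,
\[
\Expect[Z_\nplayers(x)]=\exp\!\big(\nplayers(\log(1+\alpha)-\modldrate(x))+o(\nplayers)\big).
\]

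For the upper bound on $\BEq$ in part (i), for any $x>x_\beq$ one has $\modldrate(x)>\log(1+\alpha)$, so $\Expect[Z_\nplayers(x)]\to 0$ and Markov gives $\BEq(\game_\nplayers)\le x$ with high probability. The matching lower bound $\BEq(\game_\nplayers)\ge x$ for $x<x_\beq$ is the main obstacle and requires a second moment estimate. I would decompose the pairs $(\actprof,\actprofalt)$ by their Hamming distance: a careful bookkeeping of which payoff variables $u_\play(\cdot)$ enter which PNE constraint shows that, for $d(\actprof,\actprofalt)\ge 2$, all payoffs involved in the two PNE events and in the two ASUs are distinct (even at the two ``shared'' intermediate profiles that appear when $d=2$, the player indices $\play$ differ), so these pairs contribute $(1+o(1))\Expect[Z_\nplayers(x)]^2$ to $\Expect[Z_\nplayers(x)^2]$; for $d(\actprof,\actprofalt)=1$ the only shared randomness is the tie event $\{u_\play(\actprof)=u_\play(\actprofalt)\}$ in the differing coordinate $\play$ (probability $\alpha$), and the $\nplayers\cdot 2^\nplayers$ such pairs then contribute an $O(\nplayers/2^\nplayers)$ correction relative to $\Expect[Z_\nplayers(x)]^2$. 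Chebyshev then gives $Z_\nplayers(x)\ge 1$ with high probability, whence $\BEq(\game_\nplayers)\ge x$. The proof for $\WEq$ is entirely symmetric, working with the lower tail of $\moddistr$ and with the count $|\{\actprof\in\NE(\game_\nplayers):\ASU(\game_\nplayers,\actprof)\le x\}|$.

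Part (ii) is a pure first moment argument. Setting $c_\varepsilon\coloneqq\min\{\modldrate(x_\typ-\varepsilon),\modldrate(x_\typ+\varepsilon)\}$, which is strictly positive since $\modldrate$ vanishes only at $x_\typ$, the computation above applied to the complement event yields
\[
\Expect\big|\NE(\game_\nplayers)\setminus\NE_{\typ,\varepsilon}(\game_\nplayers)\big|\le(1+\alpha)^\nplayers e^{-\nplayers c_\varepsilon+o(\nplayers)}.
\]
Markov bounds the left-hand side by $(1+\alpha)^\nplayers e^{-\nplayers c_\varepsilon/2}$ with high probability, while \cref{eq:amiet1} gives $|\NE(\game_\nplayers)|\ge(1+\alpha)^\nplayers e^{-\nplayers c_\varepsilon/4}$ with high probability; dividing yields \eqref{eq:result-typ}.

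The principal difficulty lies in the second moment computation for $Z_\nplayers(x)$: one has to verify the payoff-disjointness claim for Hamming distance $\ge 2$ (a somewhat intricate but elementary case analysis on the indices $\play$ labelling the variables $u_\play(\cdot)$) and then combine it with a conditional joint Cramer estimate for the Hamming-neighbor pairs, in which the single shared tied coordinate perturbs each ASU by $O(1/\nplayers)$ and therefore does not alter the exponential rate. Once these two ingredients are in place, the rest of the proof reduces to routine exponential bookkeeping using the convexity and the zeros of $\modldrate$.
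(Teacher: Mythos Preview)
Your proposal is correct and follows essentially the same route as the paper: first moment via Cram\'er's theorem for $\moddistr$ to get $\Expect[Z_\nplayers(x)]=\exp(\nplayers(\log(1+\alpha)-\modldrate(x))+o(\nplayers))$, Markov for the upper bound, and a second moment estimate based on decomposing pairs $(\actprof,\actprofalt)$ by Hamming distance (with independence for $d\ge 2$ via the $\sigma$-field observation you describe). One simplification worth noting: for the Hamming-neighbor pairs ($d=1$) the paper dispenses entirely with the conditional joint Cram\'er analysis you outline and instead uses the crude bound $\Prob(\actprof,\actprofalt\in Z_x^+)\le\Prob(\actprof\in Z_x^+)$, which already yields a total contribution of order $\nplayers\cdot\Expect[Z_\nplayers(x)]=o(\Expect[Z_\nplayers(x)]^2)$ whenever the first moment grows exponentially; this makes the second moment computation essentially a one-liner and avoids the ``somewhat intricate'' case analysis you flag as the main difficulty.
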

	In other words, \cref{th:convergence-ties} states that  most of the equilibria share the ``typical'' efficiency but, since they are exponentially many, some of them with a macroscopically larger \ac{ASU}. 
	Moreover, the efficiency of the best and worst equilibria do not fluctuate but converge to the solution of an explicit optimization problem.

	Since we are interested in the asymptotic properties of a random game $\game_{\nplayers}$ when $\nplayers\to\infty$, we usually neglect the dependence on $\nplayers$ (and on $\game_{\nplayers}$) when it is clear from the context. 
	
	\section{The Social Optimum}\label{sec:SO}
	In this section we prove the convergence in probability of the \ac{SO}, regardless of the value of $\alpha$. 
	This result is an immediate consequence of Cramer's large deviation theorem (see \citet{den:AMS2000}). 
	Our proof will require the definition of the following sets:
	\begin{equation}\label{eq:def-W}
	W^+_x(\game_{\nplayers})=\left\{\actprof\in\actions_{\nplayers}\::\: \ASU(\game_{\nplayers},\actprof)\ge x \right\}\quad\text{and}\quad W^-_x(\game_{\nplayers})=\left\{\actprof\in\actions_{\nplayers}\::\: \ASU(\game_{\nplayers},\actprof)\le x \right\}.
	\end{equation}
	In words, $W_x^+$ is the set of strategy profiles with an \ac{ASU} at least $x$ and, similarly, $W_x^+$ is the set of strategy profiles with an \ac{ASU} at most $x$.
	
	\begin{proof}[Proof of \cref{pr:SO}]
		We start by noticing that the claim in \cref{pr:SO} is equivalent to 	\begin{align}
		\label{eq:claim-SO1}
		\forall\varepsilon>0,\quad &\Prob\parens*{|W^+_{x_{\opt}+\varepsilon}|=\varnothing }\to 1,
		\shortintertext{and}
		\label{eq:claim-SO2}
		\forall\varepsilon>0,\quad &\Prob\parens*{|W^+_{x_{\opt}-\varepsilon}|=\varnothing }\to 0.
		\end{align}
		Recall that the \ac{ASU} of any given strategy profile has law
		\begin{equation*}
		\Prob\parens*{\ASU(\actprof)\le x } =\Prob\parens*{\frac{1}{\nplayers}\sum_{i=1}^{\nplayers} X_{\play}\le x},
		\end{equation*}
		where $(X_\play)_{\play\le \nplayers}$ is a collection of i.i.d. random variables with law $\distr$. Hence, by Cramer's large deviation theorem (see \citet{den:AMS2000}), 
		\begin{equation}\label{eq:ldp}
		\forall x> \Expect[X],\quad \lim_{\nplayers\to\infty}\frac{1}{\nplayers}\log\parens*{\Prob\parens*{\frac{1}{\nplayers}\sum_{\play=1}^{\nplayers} X_{\play}\ge x } }=-\ldrate(x).
		\end{equation}
		Therefore, by \cref{eq:ldp} the expected size of $W_x^+$ is given by
		\begin{equation}\label{eq:1stmom-ldp}
		\Expect[|W_x^+|]=\sum_{\actprof\in\actions}\Prob\parens*{\actprof\in W_x^+  }=2^{\nplayers}\cdot \exp\big(-(1+o(1)) \cdot\ldrate(x)\cdot \nplayers \big).
		\end{equation}
		It follows from the definition of $x_{\opt}$ in \cref{eq:def-xopt} that, for all $\varepsilon>0$, there exists some $\delta>0$ such that $\ldrate(x_{\opt}+\varepsilon)>\log(2)+\delta$. Thus, by \cref{eq:1stmom-ldp} and Markov's inequality we conclude
		\begin{equation*}
		\Prob\parens*{W_{x_{\opt}+\varepsilon}^+\neq \varnothing }=\Prob\parens*{|W_{x_{\opt}+\varepsilon}^+|\ge1 }\le \Expect[|W_{x_{\opt}+\varepsilon}^+ |]=e^{-(1+o(1))\delta\cdot\nplayers}\to 0,
		\end{equation*}
		which proves \cref{eq:claim-SO1}. 
		On the other hand,  since, by the definition of $x_{\opt}$ in \cref{eq:def-xopt}, $\ldrate(x_{\opt}-\varepsilon)<\log(2)$ for all $\varepsilon>0$, we have, again by \cref{eq:1stmom-ldp}
		\begin{equation}\label{eq:x-largeavg}
		\Expect[|W_{x_{\opt-\varepsilon}}^+|]\to\infty.
		\end{equation}
		Moreover, for every distinct $\actprof,\actprofalt\in\actions$, thanks to the independence of the payoffs across different profiles, we have
		\begin{equation}
		\forall x\in\R,\quad	\Prob(\actprof,\actprofalt\in W^+_x)=\Prob(\actprof\in W^+_x )^2.
		\end{equation}
		Therefore, for every choice of $x\in\R$
		\begin{align}
		\Expect[|W^+_x|]^2\le \Expect[|W^+_x|^2]&=\sum_{\actprof\in\actions}\sum_{\actprofalt\in\actions}\Prob(\actprof,\actprofalt\in W^+_x)\\
		&=\sum_{\actprof\in\actions}\Prob(\actprof\in W^+_x)+\sum_{\actprof\in\actions}\sum_{\actprofalt\neq\actprof}\Prob(\actprof\in W^+_x)^2\\
		\label{eq:bound2m}	&\le \Expect[|W^+_x|]+\Expect[|W^+_x|]^2.
		\end{align}
		Hence, 
		\begin{equation}\label{eq:2nd-mom-meth}
		\frac{\Expect[|W_{x_{\opt}-\varepsilon}^+|^2]}{\Expect[|W_{x_{\opt}-\varepsilon}^+|]^2}\le \frac{ \Expect[|W^+_{x_{\opt}-\varepsilon}|]+\Expect[|W^+_{x_{\opt}-\varepsilon}|]^2}{\Expect[|W_{x_{\opt}-\varepsilon}^+|]^2} \to 1,
		\end{equation}
		where the first inequality comes from \cref{eq:bound2m} and the limit follows from \cref{eq:x-largeavg}.
		By the second moment method (see \citet[Chapter 4]{AloSpe:Wiley2016}), \cref{eq:2nd-mom-meth} implies \cref{eq:claim-SO2}, since
		\begin{equation*}
		\Prob\parens*{W_{x_{\opt}-\varepsilon}^+ \neq\varnothing }=\Prob\parens*{|W_{x_{\opt}-\varepsilon}^+|\ge 1 }\ge 	\frac{\Expect[|W_{x_{\opt}-\varepsilon}^+|^2]}{\Expect[|W_{x_{\opt}-\varepsilon}^+|]^2}\to 1. 
		\qedhere
		\end{equation*}
	\end{proof}

	\section{First moment computation}\label{sec:first-mom}
	In this section we explicitly compute the expected number of equilibria having an average social utility above/below a given threshold. As in the proof in \cref{sec:SO}, we define the  sets
	\begin{equation}\label{eq:defZ}
	Z^+_x(\game_{\nplayers})\coloneq\left\{\actprof\in\NE(\game_{\nplayers})\::\:\ASU(\game_{\nplayers},\actprof)\ge x \right\}\subseteq W^+_x(\game_{\nplayers})
	\end{equation}
	and
	\begin{equation}
	Z^-_x(\game_{\nplayers})\coloneq\left\{\actprof\in\NE(\game_{\nplayers})\::\:\ASU(\game_{\nplayers},\actprof)\le x \right\}\subseteq W^-_x(\game_{\nplayers}),
	\end{equation}
	namely, $Z_x^+$ ($Z_x^-$) is the set of pure Nash equilibria with \ac{ASU} at least (at most) $x$.

	\begin{lemma}
		\label{le:1st-mom} 
		Let $(Y_{\play})_{\play\le \nplayers}$ be a family of i.i.d. random variables with law $\moddistr$ defined in \cref{eq:def-Ftilde}. Then,
		\begin{equation}
		\forall x\in\R,\qquad\Expect[|Z^+_x|]=(1+\alpha)^{\nplayers}\cdot\Prob\parens*{\frac{1}{\nplayers}\sum_{i=1}^{\nplayers} Y_i\ge x },\quad 	\Expect[|Z^-_x|]=(1+\alpha)^{\nplayers}\cdot\Prob\parens*{\frac{1}{\nplayers}\sum_{i=1}^{\nplayers} Y_i\le x }.
		\end{equation}
	\end{lemma}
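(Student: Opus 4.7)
The plan is to use linearity of expectation over all $2^{\nplayers}$ strategy profiles, and for each profile decompose the event $\{\actprof\in Z^+_x\}$ into the equilibrium event and the \ac{ASU} threshold event, exploiting independence of payoffs across distinct (player, profile) pairs.

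First I would fix $\actprof\in\actions_{\nplayers}$ and, for each $\play\in[\nplayers]$, write $\actprof^{(\play)}\coloneqq(\actprof_{-\play},\act_{\play}\oplus 1)$ for its $\play$-th neighbor. The key observation is that the $2\nplayers$ payoff random variables $\{u_{\play}(\actprof),u_{\play}(\actprof^{(\play)})\}_{\play\in[\nplayers]}$ are i.i.d.\ with law $\distr$, since all the profiles $\actprof^{(1)},\dots,\actprof^{(\nplayers)}$ are distinct from each other and from $\actprof$. Consequently, the $\nplayers$ events $\{u_{\play}(\actprof)\ge u_{\play}(\actprof^{(\play)})\}$ are mutually independent, and each has probability $\Prob(X\ge X')=\alpha+\beta=\tfrac{1+\alpha}{2}$, so
\begin{equation*}
\Prob(\actprof\in\NE(\game_{\nplayers}))=\parens*{\tfrac{1+\alpha}{2}}^{\nplayers}.
\end{equation*}

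Next I would argue that, conditionally on $\{\actprof\in\NE(\game_{\nplayers})\}$, the payoffs $u_{1}(\actprof),\dots,u_{\nplayers}(\actprof)$ remain mutually independent, each distributed as $X$ conditioned on $X\ge X'$, i.e.\ with law $\moddistr$ from \cref{eq:def-Ftilde}. This is immediate from the product structure of the joint law on $\{u_{\play}(\actprof),u_{\play}(\actprof^{(\play)})\}_{\play}$: the conditioning factorizes into $\nplayers$ independent one-player conditionings, each of which only affects its own pair. Therefore, letting $(Y_{\play})_{\play\le\nplayers}$ be i.i.d.\ with law $\moddistr$,
\begin{equation*}
\Prob\parens*{\ASU(\game_{\nplayers},\actprof)\ge x\,\big|\,\actprof\in\NE(\game_{\nplayers})}=\Prob\parens*{\tfrac{1}{\nplayers}\sum_{\play=1}^{\nplayers}Y_{\play}\ge x}.
\end{equation*}

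Combining the two displays via
\begin{equation*}
\Prob(\actprof\in Z^+_x)=\Prob(\actprof\in\NE)\cdot\Prob\parens*{\ASU(\actprof)\ge x\,\big|\,\actprof\in\NE},
\end{equation*}
summing over the $2^{\nplayers}$ profiles, and using $2^{\nplayers}\cdot\bigl(\tfrac{1+\alpha}{2}\bigr)^{\nplayers}=(1+\alpha)^{\nplayers}$, yields the claimed formula for $\Expect[|Z^+_x|]$. The identity for $\Expect[|Z^-_x|]$ follows by the same argument with $\{\ASU\ge x\}$ replaced by $\{\ASU\le x\}$.

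The only delicate step is the conditional independence claim, but it is a consequence of the fact that the pairs $\{u_{\play}(\actprof),u_{\play}(\actprof^{(\play)})\}$ are independent \emph{across} $\play$, so I would just state it carefully (noting that no two distinct neighbors $\actprof^{(\play)}$ and $\actprof^{(\play')}$ coincide, hence their payoff entries are not reused in the equilibrium event). The remainder is a direct computation.
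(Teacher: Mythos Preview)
Your proposal is correct and follows essentially the same approach as the paper: decompose $\Prob(\actprof\in Z^+_x)$ as $\Prob(\actprof\in\NE)\cdot\Prob(\ASU(\actprof)\ge x\mid\actprof\in\NE)$, compute the first factor as $((1+\alpha)/2)^{\nplayers}$ via independence of the pairs $(u_{\play}(\actprof),u_{\play}(\actprof^{(\play)}))$, identify the conditional law of each $u_{\play}(\actprof)$ as $\moddistr$, and sum over $\actions_{\nplayers}$. If anything, you are slightly more explicit than the paper about why the conditioning factorizes across players, which the paper simply asserts.
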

	\begin{proof}
		We start by computing the probability that a given strategy profile is a \ac{PNE} with \ac{ASU} larger than a threshold $x\in\R$. The computation for the case in which the \ac{ASU} is smaller than $x$ is the same, since it is sufficient to switch the inequality sign when needed. Notice that, by conditioning,
		\begin{equation}\label{eq:probZ}
		\Prob\parens*{\actprof\in Z^+_x }=\Prob\parens*{\actprof\in\NE }\Prob\parens*{\ASU(\actprof)\ge x\:\big\rvert\: \actprof\in \NE }.
		\end{equation}
		Given two independent random variables $X,X'$ with common distribution $\distr$, 
		\begin{equation}\label{eq:probNE}
		\Prob\parens*{\actprof\in\NE }=\Prob\parens*{\forall \play\in[\nplayers],\: u_{\play}(\actprof)\ge u_{\play}\big(\actprof_{-\play},\act_\play\oplus 1\big) }=\Prob(X\ge X')^{\nplayers}=(1-\beta)^{\nplayers}=\parens*{\frac{1+\alpha}{2}}^{\nplayers}.
		\end{equation}
		Notice that
		\begin{equation}\label{eq:probY}
		\Prob\parens*{\ASU(\actprof)\ge x\:\big\rvert\: \actprof\in \NE }=\Prob\parens*{\frac{1}{\nplayers}\sum_{\play=1}^{\nplayers} Y_\play\ge x },
		\end{equation}
		where $(Y_{\play})_{\play\le \nplayers}$ is a collection of i.i.d. random variables with law $\moddistr$, with
		\begin{align*}
		\moddistr(x)\coloneqq\Prob(X\le x\:|\: X\ge X' ).
		\end{align*}
		By explicit computation
		\begin{align*}
		\moddistr(x)&=\Prob(X\le x\:|\: X\ge X' )=\frac{\Prob(X'\le X\le x)}{\Prob(X\ge X')}\\
		&=\frac{1}{1-\beta}\int_{(-\infty,x]} \Prob(X\in[y,x] ) \diff \distr(y)\\
		&=\frac{1}{1-\beta}\int_{(-\infty,x]} (\distr(x)-\distr(y^-) ) \diff \distr(y)\\
		&=\frac{2}{1+\alpha}\parens*{\distr(x)^2-\int_{(-\infty,x]} \distr(y^-)\diff \distr(y) }.
		\end{align*}
		Furthermore, notice that if $\distr$ is  continuous we have
		\begin{align}\label{eq:def-tildeF-noties}
		\moddistr(x)=\distr(x)^2.
		\end{align}
		With \cref{eq:probZ}, \cref{eq:probNE} and \cref{eq:probY} at hand, we can easily compute the expected size of the set $Z_x^+$,
		\begin{equation}\label{eq:first-mom}
		\Expect[|Z^+_x|]=\Expect\bracks*{\sum_{\actprof\in\actions}\ind_{\actprof\in Z^+_x} }=\sum_{\actprof\in\actions}\Prob(\actprof\in Z^+_x)=(1+\alpha)^{\nplayers}\cdot\Prob\parens*{\frac{1}{\nplayers}\sum_{i=1}^{\nplayers} Y_i\ge x }.\qedhere
		\end{equation}
	\end{proof}

	\section{The case $\alpha=0$}\label{sec:noties}
	We now show that the first moment computation in \cref{sec:first-mom} is enough to show the validity of \cref{eq:result-noties}. 
	\begin{proof}[Proof of \cref{th:convergence-noties}]
		Using the definitions in \cref{eq:defZ}, the claim in  \cref{eq:result-noties} can be rephrased as
		\begin{align}\label{eq:result-noties-1}
		\forall\varepsilon>0,\quad &\Prob\parens*{Z_{x_{\typ}+\varepsilon}^+=\varnothing}\to 1
		\shortintertext{and}
		\label{eq:result-noties-2}
		\forall\varepsilon>0,\quad &\Prob\parens*{Z_{x_{\typ}-\varepsilon}^-=\varnothing}\to 1.
		\end{align} 
		In order to prove the desired result it is enough to use \cref{le:1st-mom} and the law of large numbers applied to an i.i.d. sequence of random variables having law $\moddistr$ as in \cref{eq:def-ftildenoties}. Indeed,
		\begin{equation}
		\Prob\parens*{Z_{x_{\typ}+\varepsilon}^+\neq\varnothing }= \Prob\parens*{|Z_{x_{\typ}+\varepsilon}^+|\ge 1 }\le \Expect\bracks*{|Z_{x_{\typ}+\varepsilon}^+|}=\Prob\left(\frac{1}{\nplayers}\sum_{\play=1}^{\nplayers} Y_\play\ge x_{\typ}+\varepsilon \right)\to 0,
		\end{equation}
		where the inequality follows from Markov's inequality, the last equality follows from \cref{le:1st-mom} and the limit follows from the law of large numbers. Similarly,
		\begin{equation}
		\Prob\parens*{Z_{x_{\typ}-\varepsilon}^-\neq\varnothing }= \Prob\parens*{|Z_{x_{\typ}-\varepsilon}^-|\ge 1 }\le \Expect\bracks*{|Z_{x_{\typ}-\varepsilon}^-|}=\Prob\left(\frac{1}{\nplayers}\sum_{\play=1}^{\nplayers} Y_\play\le x_{\typ}+\varepsilon \right)\to 0.\qedhere
		\end{equation}
	\end{proof}

	%
	%

	\section{The case $\alpha>0$}\label{sec:ties}
	As in \cref{sec:SO,sec:noties}, we start by rewriting the statement of \cref{th:convergence-ties}\ref{it:th:convergence-ties-1} in terms of the sets in \cref{eq:defZ}. Indeed,  \cref{eq:result-ties} is equivalent to the convergences
	\begin{align}
	\label{eq:result-ties-1}
	\forall\varepsilon\neq 0,\quad &\Prob\parens*{Z_{x_{\beq}+\varepsilon}^+=\varnothing}\to \begin{cases} 1&\text{if }\varepsilon>0,\\
	0&\text{if }\varepsilon<0
	\end{cases}
	\shortintertext{and}
	\label{eq:result-ties-2}
	\forall\varepsilon\neq 0,\quad &\Prob\parens*{Z_{x_{\weq}-\varepsilon}^-=\varnothing}\to \begin{cases} 1&\text{if }\varepsilon>0,\\
	0&\text{if }\varepsilon<0.
	\end{cases}
	\end{align} 
	We only present the proof of \cref{eq:result-ties-1}, since the proof of \cref{eq:result-ties-2} follows the same steps. The proof is in line with the one in \cref{sec:SO}, and uses only the first two moments.

	We start by estimating the second moment of the quantity $|Z^{+}_x|$ when $\alpha>0$. 
	Notice that the argument in \cref{eq:bound2m} fails if the set $W_x^+$ is replaced by its subset $Z_x^+$. This is due to the fact that, in general,
	\begin{equation}
	\Prob(\actprof,\actprofalt\in Z_x^+)\neq\Prob(\actprof \in Z_x^+)^2.
	\end{equation}
	Nonetheless, the next lemma shows that the analogue of \cref{eq:2nd-mom-meth} holds even in this case.
	\begin{lemma}\label{le:2nd-mom-est}
		Let $x\in\R$ be such that 
		\begin{equation*}
		\lim_{\nplayers\to\infty}\frac{1}{\nplayers}\log\big(\Expect[|Z_x^+|] \big)>0.
		\end{equation*}
		Then
		\begin{equation}
		\frac{\Expect[|Z_x^+|^2]}{\Expect[|Z_x^+|]^2}\to 1.
		\end{equation}
	\end{lemma}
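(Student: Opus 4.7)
The plan is to apply the classical second-moment method, decomposing
\begin{equation*}
\Expect[|Z_x^+|^2] = \sum_{d=0}^{\nplayers} \sum_{\substack{\actprof, \actprofalt \in \actions_{\nplayers} \\ d(\actprof, \actprofalt) = d}} \Prob(\actprof, \actprofalt \in Z_x^+)
\end{equation*}
according to the Hamming distance $d$ between the two profiles. The key structural observation is that, to each $\actprof$ one can associate the set of $2\nplayers$ (player, profile) pairs
\begin{equation*}
A(\actprof) \coloneqq \bigcup_{\play \in [\nplayers]} \braces*{(\play, \actprof),\, (\play, (\actprof_{-\play}, \act_\play \oplus 1))}
\end{equation*}
that indexes exactly the payoffs determining $\{\actprof \in Z_x^+\}$. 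Since the payoffs are i.i.d.\ across such pairs, the events $\{\actprof \in Z_x^+\}$ and $\{\actprofalt \in Z_x^+\}$ are independent whenever $A(\actprof) \cap A(\actprofalt) = \varnothing$. A direct combinatorial check on $\{0,1\}^{\nplayers}$ shows that this independence holds for every distinct pair with $d(\actprof, \actprofalt) \geq 2$, while for $d=1$ with differing coordinate $k$ the intersection reduces to $\{(k, \actprof), (k, \actprofalt)\}$.

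It follows that the $d=0$ diagonal contributes $\Expect[|Z_x^+|]$, which is $o(\Expect[|Z_x^+|]^2)$ by hypothesis, and pairs with $d \geq 2$ contribute at most $\Expect[|Z_x^+|]^2$ by factorization; the task reduces to showing that the Hamming-$1$ contribution is $o(\Expect[|Z_x^+|]^2)$. For such a pair, the two equilibrium conditions at player $k$ force $u_k(\actprof) = u_k(\actprofalt)$ (an event of probability $\alpha$), while those at the other players factorize and give $\Prob(\actprof, \actprofalt \in \NE) = \alpha \cdot ((1+\alpha)/2)^{2(\nplayers-1)}$. Conditional on the joint equilibrium event, the common value $S \coloneqq u_k(\actprof) = u_k(\actprofalt)$ has the atom-reweighted law $\mu(\{\ell\}) = \alpha_\ell/\alpha$, while the remaining $2(\nplayers-1)$ payoffs are jointly independent and i.i.d.\ $\moddistr$-distributed. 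Thus $\nplayers \ASU(\actprof) = S + T$ and $\nplayers \ASU(\actprofalt) = S + T'$, with $T, T'$ independent sums of $\nplayers-1$ i.i.d.\ draws from $\moddistr$, and the joint ASU-excedance probability becomes $\Expect_S\bracks*{\Prob(T \geq \nplayers x - S \mid S)^2}$.

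A Chernoff bound in the relevant large-deviation regime, combined with the exponential moments of $\distr$ guaranteed by \cref{eq:hp-mgf} (which transfer to $\mu$ and $\moddistr$), yields $\Expect_S\bracks*{\Prob(T \geq \nplayers x - S \mid S)^2} \leq C \cdot e^{-2(\nplayers-1)\modldrate(x)}$ for a constant $C$ independent of $\nplayers$; invoking Cramer's theorem, this is $e^{o(\nplayers)} \Prob(\tfrac{1}{\nplayers}\sum_\play Y_\play \geq x)^2$. Summing over the $\nplayers \cdot 2^{\nplayers}$ ordered Hamming-$1$ pairs and dividing by $\Expect[|Z_x^+|]^2 = (1+\alpha)^{2\nplayers} \Prob(\tfrac{1}{\nplayers}\sum_\play Y_\play \geq x)^2$ from \cref{le:1st-mom} produces a ratio of order $\nplayers \cdot e^{o(\nplayers)}/2^{\nplayers}$, which vanishes exponentially. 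Combining the three contributions yields $\limsup \Expect[|Z_x^+|^2]/\Expect[|Z_x^+|]^2 \leq 1$, while Cauchy--Schwarz supplies the matching lower bound. The hardest step is the Chernoff-type control of $\Expect_S\bracks*{\Prob(T \geq \nplayers x - S \mid S)^2}$: the shared value $S$ has a different distribution from the $\moddistr$-summands of $T$, so its random shift must be absorbed via exponential moments without degrading the large-deviation rate $\modldrate(x)$.
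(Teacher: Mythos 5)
Your proof is correct and follows the same second-moment skeleton as the paper's: decompose $\Expect[|Z_x^+|^2]$ over pairs of profiles, use the exact independence of $\{\actprof\in Z_x^+\}$ and $\{\actprofalt\in Z_x^+\}$ for pairs at Hamming distance at least $2$, control the diagonal by the exponential-growth hypothesis, and get the matching lower bound from $\Expect[|Z_x^+|^2]\ge\Expect[|Z_x^+|]^2$. The only place you genuinely diverge is the Hamming-$1$ term. The paper dispatches it with the trivial bound $\Prob(\actprof,\actprofalt\in Z_x^+)\le\Prob(\actprof\in Z_x^+)$, so the $\nplayers\cdot 2^{\nplayers}$ ordered neighbour pairs contribute at most $\nplayers\,\Expect[|Z_x^+|]$, which is $o(\Expect[|Z_x^+|]^2)$ precisely because the hypothesis forces $\Expect[|Z_x^+|]$ to be exponentially large. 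Your exact analysis of the joint law --- the forced tie $u_k(\actprof)=u_k(\actprofalt)$ of probability $\alpha$, the conditional i.i.d.\ $\moddistr$ structure of the remaining $2(\nplayers-1)$ payoffs, and the Chernoff absorption of the shared shift $S$ via its exponential moments --- is sound, and it buys the stronger statement that the neighbour contribution is smaller than $\Expect[|Z_x^+|]^2$ by a factor of order $\nplayers\,e^{o(\nplayers)}/2^{\nplayers}$ independently of the hypothesis (which you then only need for the diagonal). The price is that this is by far the most delicate step of your argument, and it is entirely avoidable: the one-line bound already suffices.
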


	\begin{proof}
		We claim that  for every $\actprof,\actprofalt$ differing in at least two strategies, the events $\{\actprof\in Z_x^+\}$ and $\{\actprofalt\in Z_x^+\}$ are independent. 
		Indeed, notice that the event $\{\actprof\in Z_x^+\}$  is measurable with respect to the $\sigma$-field
		\begin{equation}\label{eq:sigma1}
		\sigma\big(\{u_\play(\actprof)\}, \{u_\play(\actprof_{-\play},\act_\play\oplus 1) \}\::\:\play\in[\nplayers]\big).
		\end{equation} 
		We remark that if $\actprofalt$ differs from $\actprof$ in at least two strategies, then the events $\{\actprof\in Z_x^+ \}$ and  $\{\actprofalt\in Z_x^+ \}$ are measurable with respect to independent $\sigma$-fields, hence they are independent. On the other hand, if $\actprof,\actprofalt$ differ in only one strategy, such $\sigma$-fields are not independent. When, for some $\play\in[\nplayers]$,  $\actprofalt=\big(\actprof_{-\play},\act_\play\oplus 1\big)$ we will use the trivial bound
		\begin{align}\label{eq:cond-dist-1}
		\Prob\parens*{\actprof,\actprofalt\in Z_x^+ }\le\Prob\parens*{\actprof\in Z_x^+ }.
		\end{align}
		We now bound the second moment. Let $\mathcal{N}(\actprof)$ denote the set of strategy profiles differing from $\actprof$ in a single coordinate. 
		By \cref{eq:cond-dist-1} we can conclude
		\begin{align*}
		\Expect[|Z_x^+|^2]&=\sum_{\actprof\in\actions}\sum_{\actprofalt\in\actions}\Prob\parens*{\actprof,\actprofalt\in Z_x^+ }\\
		\nonumber&=\sum_{\actprof\in\actions}\bracks*{\Prob(\actprof\in Z_x^+)+\sum_{\actprofalt\in\cN(\actprof)}\Prob(\actprof,\actprofalt\in Z_x^+)+\sum_{\actprofalt\not\in\cN(\actprof)\cup{\{\actprof\}}}\Prob(\actprof\in Z_x^+)^2 }\\
		\nonumber&\le  2^{\nplayers}\cdot\Prob(\actprof\in Z_x^+)+ \nplayers\cdot 2^{\nplayers}\cdot \Prob(\actprof\in Z_x^+) +2^{\nplayers}\cdot\parens*{2^{\nplayers}-\nplayers-1 }\Prob(\actprof\in Z_x^+)^2\\
		\nonumber&=\parens*{1+\nplayers}\Expect[|Z^+_x|]+(1+o(1))\big(2^{\nplayers}\cdot\Prob(\actprof\in Z_x^+)\big)^2\\
		=&\big(1+o(1)\big)\cdot\Expect[|Z_x^+|]^2+O\parens*{{\nplayers}\cdot \Expect[|Z_x^+|]}.
		\end{align*}
		We notice that, if the expected size of $Z_x^+$ is exponentially large, then its square is asymptotically larger than ${\nplayers}\cdot \Expect[|Z_x^+|]$, and therefore the second moment of $|Z_x^+|$ coincides at first order with the first moment square. 
		More precisely, if for some $x\in\R$ we have
		\begin{equation}\label{eq:sec-mom-ties}
		\lim_{\nplayers\to\infty}\frac{1}{\nplayers}\log\big(\Expect[|Z_x^+|]\big)>0\quad\text{then}\quad\frac{\Expect[|Z_x^+|^2]}{\Expect[|Z_x^+|]^2}\to 1,
		\end{equation}
		which is the desired result.
	\end{proof}

	Using the results in \cref{le:1st-mom} and \cref{le:2nd-mom-est} we now prove the convergence in \cref{eq:result-ties-1}. 
	As in \cref{sec:SO}, the second case in \cref{eq:result-ties-1} follows by Markov's inequality. To show the first case in \cref{eq:result-ties-1}, we need to consider the large deviation rate for the distribution $\moddistr$ in \cref{eq:def-Ftilde}, i.e.,
	\begin{equation}\label{eq:def-ratetilde}
	\modldrate(x)\coloneqq\sup_{t\in\R}\big[xt-\log\parens*{\Expect\bracks*{e^{tY}} } \big],
	\end{equation}
	where $Y\sim\moddistr$. The fact that the rate in \cref{eq:def-ratetilde} is well defined follows from the next lemma.
	\begin{lemma}\label{le:mgf}
		Let $\distr$ be a distribution on $\R$ such that, if $X\sim \distr$,
		\begin{equation*}
		\Expect[e^{tX}]\le \infty,\quad\forall t\in\R,
		\end{equation*}
		and let $\moddistr$ be defined as in \cref{eq:def-Ftilde} and $Y\sim\moddistr$, then
		\begin{equation*}
		\Expect[e^{tY}]\le 2\Expect[e^{tX}],\quad\forall t\in\R.
		\end{equation*}
	\end{lemma}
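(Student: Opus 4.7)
The plan is to rewrite the modified distribution $\moddistr$ in its natural probabilistic form as a conditional law of $X$, and then bound its moment generating function directly using the fact that the conditioning event has probability bounded away from zero.

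First, I would recall from the definition in \cref{eq:def-Ftilde} that $\moddistr$ is the distribution of $X$ conditional on $\{X\ge X'\}$, where $X,X'$ are i.i.d.\ with law $\distr$. Consequently, for any $t\in\R$,
\begin{equation*}
\Expect[e^{tY}]=\Expect\bracks*{e^{tX}\mid X\ge X'}=\frac{\Expect\bracks*{e^{tX}\ind_{X\ge X'}}}{\Prob(X\ge X')}.
\end{equation*}

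Next, I would use two simple estimates. The denominator satisfies
\begin{equation*}
\Prob(X\ge X')=\Prob(X>X')+\Prob(X=X')=\beta+\alpha=\frac{1+\alpha}{2}\ge\frac{1}{2},
\end{equation*}
and the numerator satisfies, because $e^{tX}>0$ and $\ind_{X\ge X'}\le 1$,
\begin{equation*}
\Expect\bracks*{e^{tX}\ind_{X\ge X'}}\le\Expect\bracks*{e^{tX}}=\mgf(t),
\end{equation*}
where the independence of $X$ and $X'$ makes the factor $\ind_{X\ge X'}$ harmless. Combining these two bounds gives
\begin{equation*}
\Expect[e^{tY}]\le\frac{\mgf(t)}{1/2}=2\Expect[e^{tX}],
\end{equation*}
which is the claimed inequality, valid for every $t\in\R$ under the finiteness hypothesis on $\mgf$.

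There is essentially no obstacle here: the only subtlety is recognizing that $\moddistr$ is literally the conditional law of $X$ given $X\ge X'$ (which is exactly what the right-hand side of \cref{eq:def-Ftilde} encodes, via the identity $\Prob(X'\le X\le y)=\distr(y)^2-\int_{(-\infty,y]}\distr(x^-)\diff\distr(x)$), so that the ratio representation of the conditional expectation applies. Once that is observed, the bound $\Prob(X\ge X')\ge 1/2$ together with the trivial domination by the indicator yields the factor $2$ in one line.
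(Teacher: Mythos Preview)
Your proof is correct and follows essentially the same approach as the paper: both rewrite $\Expect[e^{tY}]$ as the conditional expectation $\Expect[e^{tX}\mid X\ge X']$, bound the numerator by dropping the indicator (equivalently, the paper uses $\distr(x)\le 1$ after expanding the density), and bound the denominator using $\Prob(X\ge X')=1-\beta\ge 1/2$. Your version is in fact slightly more streamlined, since you avoid the explicit density decomposition and work directly with the ratio $\Expect[e^{tX}\ind_{X\ge X'}]/\Prob(X\ge X')$; the remark about independence is unnecessary, as the inequality $\Expect[e^{tX}\ind_{X\ge X'}]\le\Expect[e^{tX}]$ holds purely by positivity.
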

	
	\begin{proof}
		Let $X,X'$ be two independent random variables with law $\distr$. Recall that
		\begin{equation}
		\moddistr(x)=\Prob\parens*{X\le x\:|\: X\ge X' }.
		\end{equation}
		Therefore
		\begin{align*}
		\Expect[e^{tY}]&=\Expect[e^{tX}\:|\: X\ge X']\\
		&=\parens*{1-\sum_{\ell\in L}\sqrt{\alpha_\ell}}\int_{\R}e^{tx}f(x|X\ge X')\diff x+\sum_{\ell\in L}\sqrt{\alpha_\ell}e^{t\ell}\Prob\parens*{X=\ell\:|\: X\ge X' }\\
		&=\frac{1}{1-\beta}\bracks*{\parens*{1-\sum_{\ell\in L}\sqrt{\alpha_\ell}}\int_{\R }e^{tx}f(x)\distr(x)\diff x+\sum_{\ell\in L}\sqrt{\alpha_\ell} \distr(\ell)e^{t\ell} }\\
		&\le \frac{1}{1-\beta}\bracks*{\parens*{1-\sum_{\ell\in L}\sqrt{\alpha_\ell}}\int_{\R }e^{tx}f(x)\diff x+\sum_{\ell\in L}\sqrt{\alpha_\ell}e^{t\ell}  }\\
		&\le 2\Expect[e^{tX}],
		\end{align*}
		where in the first inequality we used the trivial bound $\distr(x)\le 1$ for all $x\in\R$, while in the last inequality we used the fact that $\beta\le 1/2$. 
	\end{proof}	
	\begin{proof}[Proof of \cref{th:convergence-ties}\ref{it:th:convergence-ties-1}]
		By \cref{le:1st-mom} and the definition of $x_{\beq}$ in \cref{eq:def-xbeq},
		\begin{equation*}
		\forall\varepsilon>0,\qquad\Expect[|Z_{x_{\beq}+\varepsilon}^+|]=(1+\alpha)^{\nplayers} \exp\big(-(1+o(1))\cdot \modldrate(x)\cdot\nplayers \big)\to 0,
		\end{equation*}
		and by Markov's inequality,
		\begin{equation*}
		\Prob\parens*{Z_{x_{\beq}+\varepsilon}^+\neq \varnothing }\le \Expect[|Z_{x_{\beq}+\varepsilon}^+ |]\to 0.
		\end{equation*}
		On the other hand, again by \cref{le:1st-mom},
		\begin{equation}
		\lim_{\nplayers\to\infty}\frac{1}{\nplayers}\log\big(\Expect[|Z_{x_{\beq}-\varepsilon}^+|]\big)>0,
		\end{equation}
		so that, by \cref{le:2nd-mom-est} and the second moment method, we can conclude that
		\begin{equation}
		\Prob\parens*{Z_x^+ \neq\varnothing }\ge 	\frac{\Expect[|Z_x^+|]^2}{\Expect[|Z_x^+|^2]}\to 1.\qedhere
		\end{equation}
	\end{proof}
	We are no left to show the validity of \cref{eq:result-typ}.


	\begin{proof}[Proof of \cref{th:convergence-ties}\ref{it:th:convergence-ties-2}]
		Using the notation adopted in the proofs, the convergence in \cref{eq:result-typ} can be rephrased as
		\begin{equation}\label{eq:result-typ-new}
		\forall\varepsilon>0,\quad\frac{|Z^+_{x_{\typ}+\varepsilon}|+|Z^-_{x_{\typ}-\varepsilon}|}{|\NE|}\overset{\Prob}{\longrightarrow}0.
		\end{equation} 
		We now show that
		\begin{equation}\label{eq:result-typ-new+}
		\forall\varepsilon>0,\quad\frac{|Z^+_{x_{\typ}+\varepsilon}|}{|\NE|}\overset{\Prob}{\longrightarrow}0,
		\end{equation} 
		and the result for $Z_{x_{\typ}-\varepsilon}^-$ follows from the same argument by reversing the signs. 
		
		Notice that, by \cref{eq:amiet1},
		\begin{equation}\label{eq:amiet}
		\forall\delta>0,\quad \Prob\big(|\NE|>(1+\alpha-\delta )^{\nplayers}  \big)\to 1.
		\end{equation} 
		Therefore, \cref{eq:result-typ-new+} is equivalent to that of the following statement
		\begin{equation}\label{eq:result-typ-new++}
		\forall\varepsilon>0,\quad\exists\delta=\delta(\varepsilon)>0\:\text{ s.t.}\quad \Prob\parens*{|Z_{x_{\typ}+\varepsilon}^+|<(1+\alpha-\delta)^{\nplayers} }\to 1.
		\end{equation}
		Fix some $\varepsilon>0$ and consider the case when the expected size of $Z_{x_{\typ}+\varepsilon}^+$ is not exponentially large, i.e.,
		\begin{equation}
		\lim_{\nplayers\to\infty}\frac{1}{\nplayers}\log\big(\Expect[|Z_{x_{\typ}+\varepsilon}^+|] \big)\le 0.
		\end{equation}
		In this case, by Markov's inequality, for all $\delta\in(0,\alpha)$
		\begin{equation}
		\Prob\parens*{|Z_{x_{\typ}+\varepsilon}^+|>(1+\alpha-\delta)^{\nplayers} }\le \frac{\Expect[|Z_{x_{\typ}+\varepsilon}^+|]}{(1+\alpha-\delta)^{\nplayers}}\to 0.
		\end{equation}
		Consider now $\varepsilon>0$ such that
		\begin{equation}\label{eq:gamma}
		\lim_{\nplayers\to\infty}\frac{1}{\nplayers}\log\big(\Expect[|Z_{x_{\typ}+\varepsilon}^+|] \big)=\log(1+\alpha)-\modldrate(x_{\typ}+\varepsilon)\in \big(0,\log(1+\alpha)\big),
		\end{equation}
		where the first equality follows by \cref{le:1st-mom} and Cramer's large deviation theorem. Even if the expected size of $Z_{x_{\typ}+\varepsilon}^+$ is exponentially large, by \cref{eq:amiet} it is still smaller than the size of $\NE$. Therefore, to prove that \cref{eq:result-typ-new++} holds it is enough to show that $|Z_{x_{\typ}+\varepsilon}^+|$  concentrates at first order around its expectation. By applying Chebyshev's inequality we get that, for all $\gamma>0$,
		\begin{align*}
		\Prob\parens*{\big| |Z_{x_{\typ}+\varepsilon}^+|- \Expect[|Z_{x_{\typ}+\varepsilon}^+|] \big|> \gamma \Expect\bracks{|Z_{x_{\typ}+\varepsilon}^+|} }\le \frac{\Var\bracks*{|Z_{x_{\typ}+\varepsilon}^+|}}{\gamma^2 \Expect\bracks*{|Z_{x_{\typ}+\varepsilon}^+|}^2}
		\end{align*}
		By \cref{eq:gamma} and \cref{le:2nd-mom-est} we know that
		\begin{equation}
		\Var\bracks*{|Z_{x_{\typ}+\varepsilon}^+|}=\Expect[|Z_{x_{\typ}+\varepsilon}^+|^2]-\Expect[|Z_{x_{\typ}+\varepsilon}^+|]^2=o\big(\Expect[|Z_{x_{\typ}+\varepsilon}^+|]^2\big),
		\end{equation}
		hence,
		\begin{align*}
		\forall\gamma>0,\quad\Prob\parens*{\big| |Z_{x_{\typ}+\varepsilon}^+|- \Expect[|Z_{x_{\typ}+\varepsilon}^+|] \big|> \gamma \Expect[|Z_{x_{\typ}+\varepsilon}^+|] }\to 0,
		\end{align*}
		and, as a consequence,
		\begin{align*}
		\Prob\parens*{|Z_{x_{\typ}+\varepsilon}^+|> 2 \Expect[|Z_{x_{\typ}+\varepsilon}^+|]  }\to 0.
		\end{align*}
		Since, by \cref{eq:gamma}, there exists some $\delta=\delta(\varepsilon)>0$ such that
		\begin{equation*}
		\Expect[|Z_{x_{\typ}+\varepsilon}^+|]< (1-\alpha-\delta)^{\nplayers},
		\end{equation*}
		\cref{eq:result-typ-new++} immediately follows.
	\end{proof}

	\section{binary payoffs}\label{sec:binary}
	\label{se:proof-convergence}
	In this section we study in detail the case in which $\distr$ is the distribution of a Bernoulli random variable with parameter $\proba$, i.e.,
	\begin{equation*}
	\distr(x)=\begin{cases}0&\text{if }x<0\\
	1-\proba&\text{if }x\in[0,1) \\
	1&\text{if }x\ge 1
	\end{cases}.
	\end{equation*}
	It follows that
	\begin{equation*}
	\alpha=\proba^2+(1-\proba)^2
	\end{equation*}
	and
	\begin{equation*}
	\Prob(X=0\:|\: X\ge X')=\frac{1}{1-\beta}\Prob(X=X'=0)=\frac{(1-\proba)^2}{1-\proba(1-\proba)}=\frac{1-2\proba+\proba^2}{1-\proba+\proba^2},
	\end{equation*}
	so that
	\begin{equation*}
	\moddistr(x)=\begin{cases}0&\text{if }x<0\\
	\frac{1-2\proba+\proba^2}{1-\proba+\proba^2}&\text{if }x\in[0,1) \\
	1&\text{if }x\ge 1
	\end{cases},
	\end{equation*}
	i.e., $\moddistr$ is the distribution of a Bernoulli random variable with parameter
	\begin{equation}\label{ptilde}
	\modproba= 1-\frac{1-2\proba+\proba^2}{1-\proba+\proba^2}=\frac{\proba}{1-\proba+\proba^2}.
	\end{equation}
	For $q\in(0,1)$ and $x\in[0,1]$ consider the entropy function
	\begin{equation*}
	H_q(x)\coloneq x\log\parens*{\frac{x}{q} }+(1-x)\log\parens*{\frac{1-x}{1-q} }.
	\end{equation*}
	Then, the large deviation rates $\ldrate$ and $\modldrate$ in \cref{eq:def-rate,eq:def-ratetilde} have the following form
	\begin{equation*}
	\forall \proba\in(0,1),x\in[0,1],\quad \ldrate(x)=H_\proba(x)\quad\text{and}\quad\modldrate(x)=H_{\modproba}(x),
	\end{equation*}
	\citep[see, e.g.,][page~3]{Fis:mimeo2013}.
	Notice that $H_q(x)$ is convex in $x$ for all $q\in(0,1)$, and assumes its minimum at $x=q$, where $H_q(q)=0$.
	\begin{figure}[h]
		\centering
		\includegraphics[width=6.2cm]{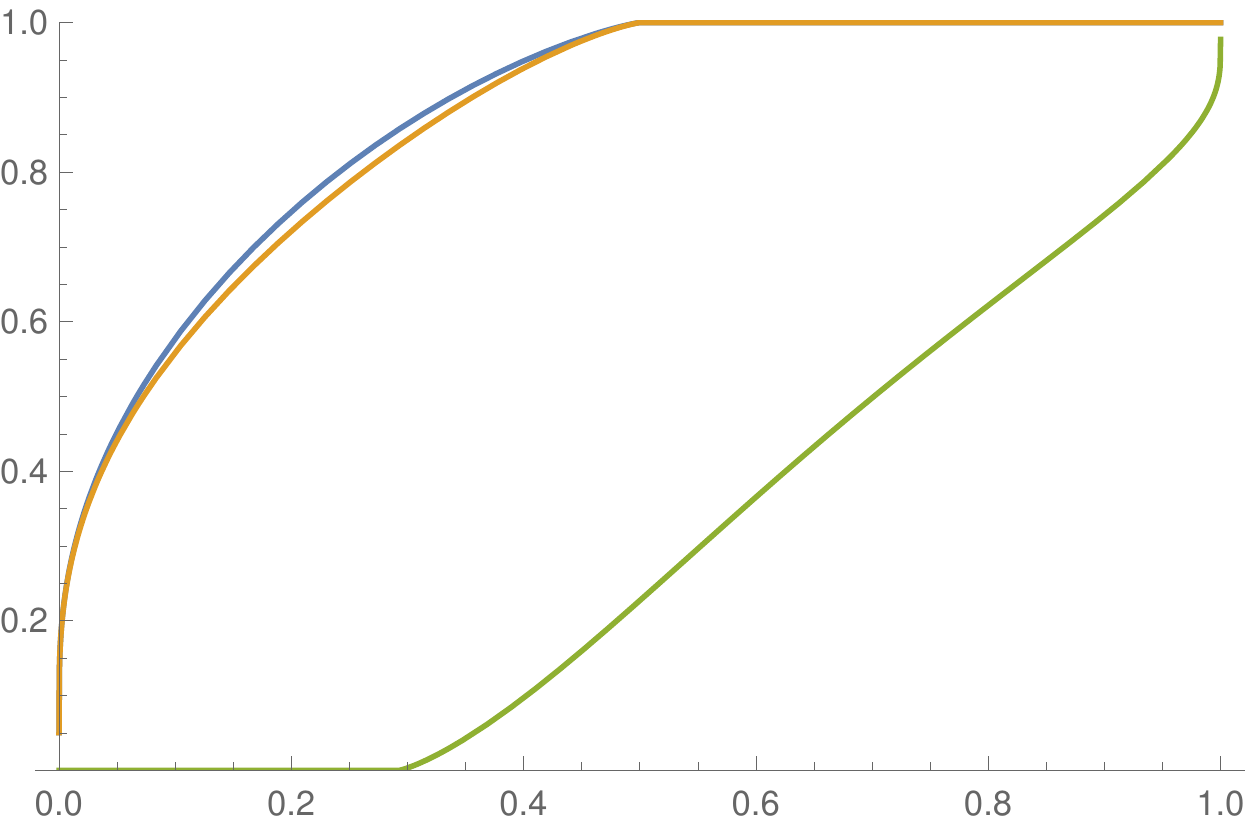}\qquad\qquad
		\includegraphics[width=6.2cm]{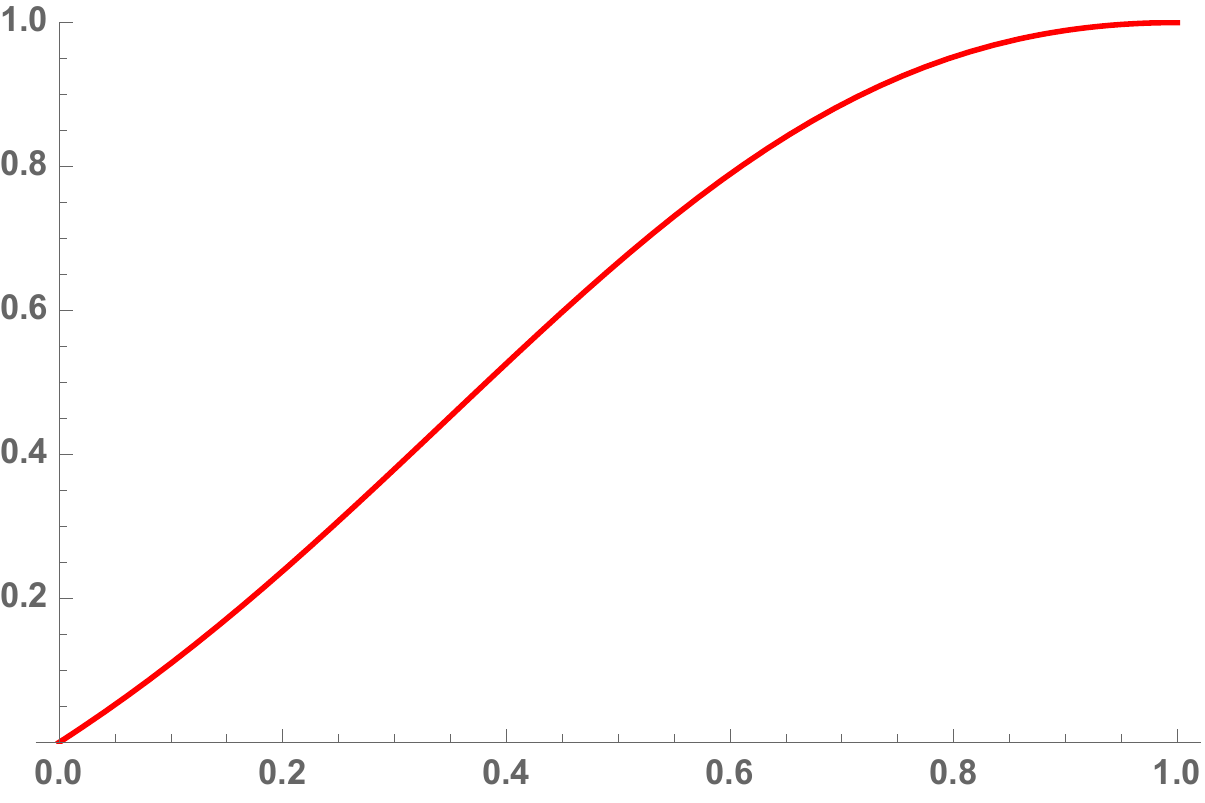}
		\caption{Left: numerical approximation of the functions defined in \cref{eq:x-x-x}. In blue: $x_{\opt}(\proba)$. In orange: $x_{\beq}(\proba)$. In green: $x_{\weq}(\proba)$. Right: Plot of the function $x_{\typ}(\proba)=\proba/(1-\proba+\proba^2)$.}\label{fig:x-x-x}
	\end{figure}
	\begin{proposition}\label{pr:binary}
		Let $\distr$ be the distribution of a Bernoulli random variable with parameter $\proba$. Then there exists three continuous functions
		\begin{equation}
		\label{eq:x-x-x}
		x_{\opt},x_{\beq},x_{\weq}:(0,1)\to[0,1],
		\end{equation}
		such that
		\begin{equation}\label{eq:bin-conv}
		\big(\SO(\game_{\nplayers}),\BEq(\game_{\nplayers}),\WEq(\game_{\nplayers}) \big)\overset{\Prob}{\longrightarrow}\big(x_{\opt}(\proba),x_{\beq}(\proba),x_{\weq}(\proba) \big).
		\end{equation}
		Moreover, the functions $x_{\opt}$ and $x_{\beq}$ are both increasing on the interval $(0,1/2)$ and are identically equal to $1$ on the interval $[1/2,1]$.
		The function $x_{\weq}$ is identically $0$ on the interval $(0,1-\sqrt{2}/2)$ and is increasing on the interval $[1-\sqrt{2}/2,1]$.
	\end{proposition}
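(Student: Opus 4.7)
The plan is to identify the three limits with the quantities prescribed by Proposition 1 and Theorem 2, specialized to $\distr = \mathrm{Bern}(\proba)$, and then to read off monotonicity and boundary behavior from the explicit Bernoulli rate function $H_\proba$. Since $\alpha(\proba) = \proba^2 + (1-\proba)^2 > 0$ on $(0,1)$, both Proposition 1 and Theorem 2(i) apply, yielding the convergence in \cref{eq:bin-conv} with $x_{\opt},x_{\beq},x_{\weq}$ defined by \cref{eq:def-xopt,eq:def-xbeq,eq:def-xweq}, where $\ldrate = H_\proba$, $\modldrate = H_{\modproba}$, $\modproba = \proba/(1-\proba+\proba^2)$, and $\log(1+\alpha) = \log 2 + \log(1-\proba+\proba^2)$. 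Continuity of the three functions on $(0,1)$ would follow from the joint continuity of $H_\proba(x)$ in $(\proba,x)$ and the implicit function theorem on the open subintervals where the defining equations admit non-degenerate interior solutions.

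To pin down the boundary behavior I would compute the rate functions at the extremes $x\in\{0,1\}$ and check when the defining inequality has a non-trivial solution. Since $H_\proba(1) = -\log \proba$, one has $x_{\opt}(\proba) = 1$ precisely when $-\log \proba \le \log 2$, i.e.\ on $[1/2, 1)$. For $x_{\beq}$ the handy identity $\modproba(1+\alpha) = 2\proba$ turns the analogous condition $-\log \modproba \le \log(1+\alpha)$ into $2\proba \ge 1$, giving again $x_{\beq}(\proba) = 1$ iff $\proba \ge 1/2$. For $x_{\weq}$ the identity $(1-\modproba)(1+\alpha) = 2(1-\proba)^2$ reduces the corresponding condition to $2(1-\proba)^2 \ge 1$, i.e.\ $\proba \le 1 - \sqrt{2}/2$, in agreement with the claimed constant regime.

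For monotonicity, the key step is the algebraic observation that the equation $H_{\modproba}(x) = \log(1+\alpha)$ is equivalent, after substituting the explicit forms of $\modproba$ and $\alpha$ and cancelling $\log(1-\proba+\proba^2)$ on both sides, to
\[
\Phi(x, \proba) := H_\proba(x) - (1-x)\log(1-\proba) = \log 2.
\]
A short computation gives $\partial_x \Phi(x,\proba) = \log(x(1-\proba)^2/((1-x)\proba))$, which is positive on the right branch $x > \modproba$ and negative on the left branch, and $\partial_\proba \Phi(x,\proba) = (2\proba - x(1+\proba))/(\proba(1-\proba))$. The implicit function theorem then shows that $x_{\beq}'(\proba)$ has the same sign as $x_{\beq}(\proba)(1+\proba) - 2\proba$, while $x_{\weq}'(\proba)$ has the opposite sign.

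The only genuinely technical step, and therefore the main obstacle, is reducing the monotonicity of $x_{\beq}$ and $x_{\weq}$ to a single scalar inequality. Since $\Phi(\cdot,\proba)$ is $U$-shaped with minimum at $\modproba$, both monotonicity claims are equivalent to the assertion
\[
\Phi\!\left(\frac{2\proba}{1+\proba},\,\proba\right) < \log 2 \qquad \text{for all } \proba \in (0,1),
\]
which after direct simplification reduces to the positivity on $[0,1)$ of
\[
g(\proba) := (1+\proba)\log(1+\proba) + (1-\proba)\log(1-\proba) + (1-\proba)\log 2.
\]
The computation $g'(\proba) = \log((1+\proba)/(2(1-\proba)))$ shows that $g$ attains its minimum uniquely at $\proba = 1/3$, where $g(1/3) = 2\log(4/3) > 0$; this proves the inequality and hence the monotonicity of both $x_{\beq}$ and $x_{\weq}$ on the stated intervals. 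The monotonicity of $x_{\opt}$ on $(0,1/2)$ is a simpler, direct application of the implicit function theorem to $H_\proba(x) = \log 2$, using $\partial_\proba H_\proba = (\proba-x)/(\proba(1-\proba)) < 0$ for $x > \proba$.
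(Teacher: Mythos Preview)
Your proposal is correct and follows the same overall route as the paper: both obtain \cref{eq:bin-conv} by specializing \cref{pr:SO} and \cref{th:convergence-ties} to the Bernoulli case, identify the rate functions with $H_\proba$ and $H_{\modproba}$, and then locate the thresholds $1/2$ and $1-\sqrt{2}/2$ by evaluating $H_\proba(1)$, $H_{\modproba}(1)$ and $H_{\modproba}(0)$ against $\log 2$ and $\log(1+\alpha)$ respectively (your identities $\modproba(1+\alpha)=2\proba$ and $(1-\modproba)(1+\alpha)=2(1-\proba)^2$ give exactly the paper's equivalences).

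Where you go further than the paper is on the monotonicity claims. The paper simply asserts that continuity and convexity of $x\mapsto H_\proba(x)$ and $x\mapsto H_{\modproba}(x)$, together with the location of their minima, are ``sufficient'' for the stated monotonicity in $\proba$, and then only verifies the threshold values. You instead rewrite $H_{\modproba}(x)=\log(1+\alpha)$ as $\Phi(x,\proba)=H_\proba(x)-(1-x)\log(1-\proba)=\log 2$, apply the implicit function theorem, and reduce both monotonicity statements to the single inequality $g(\proba)=(1+\proba)\log(1+\proba)+(1-\proba)\log(1-\proba)+(1-\proba)\log 2>0$, which you then verify via $g'(\proba)=\log\!\big((1+\proba)/(2(1-\proba))\big)$ and $g(1/3)=2\log(4/3)>0$. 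This argument is genuinely more complete than what the paper provides and fills what is, strictly speaking, a gap in the published proof; the trade-off is that the paper's version is shorter and relies on the reader accepting monotonicity from the qualitative picture of the level sets.
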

	In words, the limit quantities for $\SO$, $\BEq$ and $\WEq$---seen as a functions of $\proba$---display a peculiar behavior, namely, there exists some threshold for the value of $\proba$ before/after which the functions stay constant.
	\begin{proof}[Proof of \cref{pr:binary}]
		The convergence in \cref{eq:bin-conv} is a corollary of \cref{pr:SO} and \cref{th:convergence-ties}. More explicitly, the three limiting quantities can be defined as
		\begin{align}
		\label{eq:xopt-p}	x_{\opt}(\proba)&\coloneqq\inf\left\{x>\proba\::\: H_\proba(x)= \log(2) \right\},\\
		\label{eq:xbeq-p}		x_{\beq}(\proba)&\coloneqq\sup\left\{x\in[0,1]\::\: H_{\modproba}(x)= \log(1+\proba^2+(1-\proba)^2) \right\},\\
		\label{eq:xweq-p}			x_{\weq}(\proba)&\coloneqq\inf\left\{x\in[0,1]\::\: H_{\modproba}(x)= \log(1+\proba^2+(1-\proba)^2) \right\},
		\end{align} 
		where $\modproba$ is defined as in \cref{ptilde}.
		In order to prove the continuity and monotonicity of the functions as stated in \cref{pr:binary} it is sufficient to recall that both $H_\proba(x)$ and $H_{\modproba}(x)$ are continuous and convex in $x$ and they assume their minimum value, i.e., $0$, only at $\proba$ and $\modproba$, respectively. See \cref{fig2} for a plot of the two function for different choices of $\proba$. Notice also that the function $\proba\mapsto H_{\modproba}(0)$ is increasing in $\proba$ and that, by definition of $H_{\modproba}(x)$ and $\alpha$,
		\begin{equation}
		H_{\modproba}(0)\le \log(1+\alpha)\quad\iff \quad (1-\proba)(1+\proba^2+(1-\proba)^2)\ge 1\quad\iff\quad \proba\le 1-\frac{\sqrt{2}}{2}.
		\end{equation}
		The latter implies that $x_{\weq}(\proba)=0$ for all $\proba\le 1-\frac{\sqrt{2}}{2}$, while $x_{\weq}(\proba)$ is strictly increasing for $\proba\ge 1-\frac{\sqrt{2}}{2}$.
		
		Similarly, the functions $\proba\mapsto H_{\modproba}(1)$ and $\proba\mapsto H_{\proba}(1)$ are decreasing in $\proba$ and
		\begin{equation}
		H_{\proba}(1)\le \log(2)\quad\iff\quad \proba\ge \frac{1}{2},
		\end{equation}
		while
		\begin{equation}
		H_{\modproba}(1)\le \log(1+\alpha)\quad\iff\quad\frac{1-\proba+\proba^2}{\proba}\le 1+\proba^2+(1-\proba)^2   \quad\iff\quad \proba\ge \frac{1}{2}.
		\end{equation}
		In other words, $x_{\opt}(\proba)=x_{\beq}(\proba)=1$ for all $\proba\ge 1/2$, while  they are increasing functions of $\proba$ in the interval $[0,1/2]$.
	\end{proof} 	
	\begin{figure}[h]
		\centering
		\includegraphics[width=6.2cm]{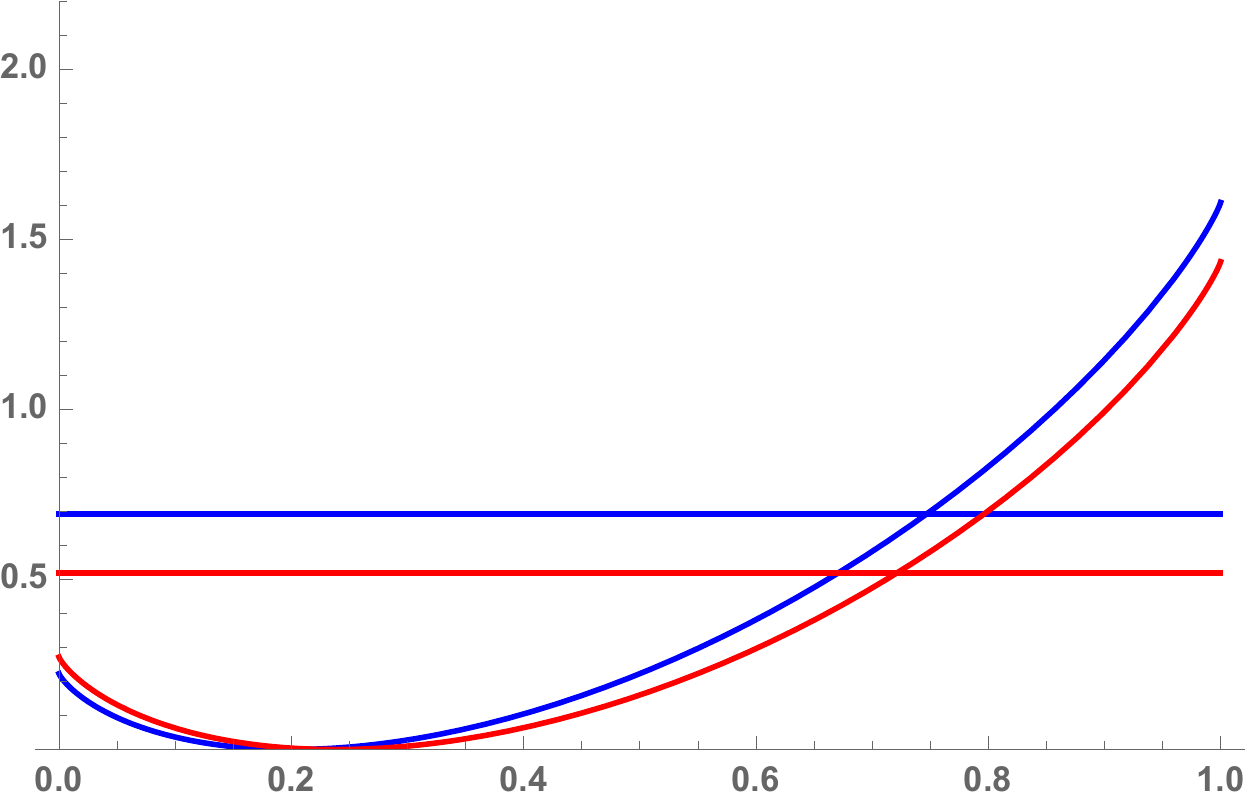}\qquad\qquad
		\includegraphics[width=6.2cm]{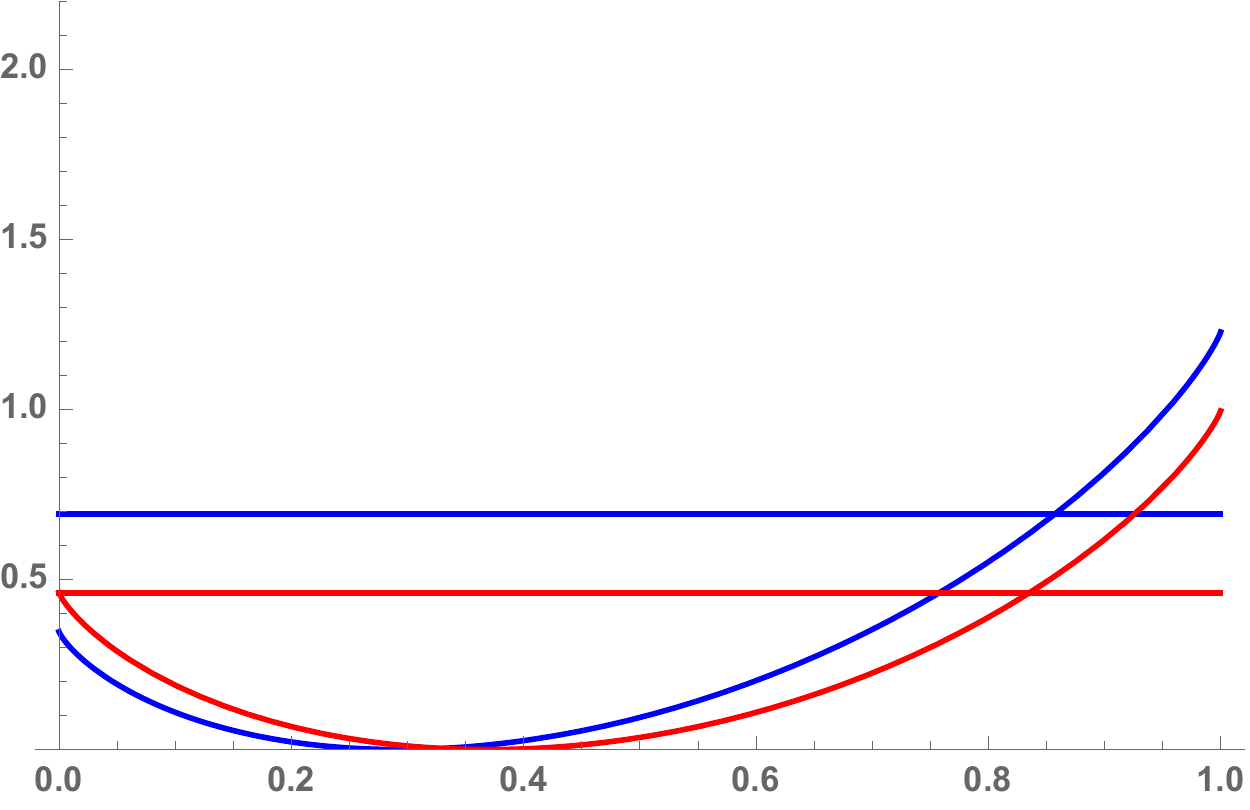}\\ \vspace{0.5cm}
		\includegraphics[width=6.2cm]{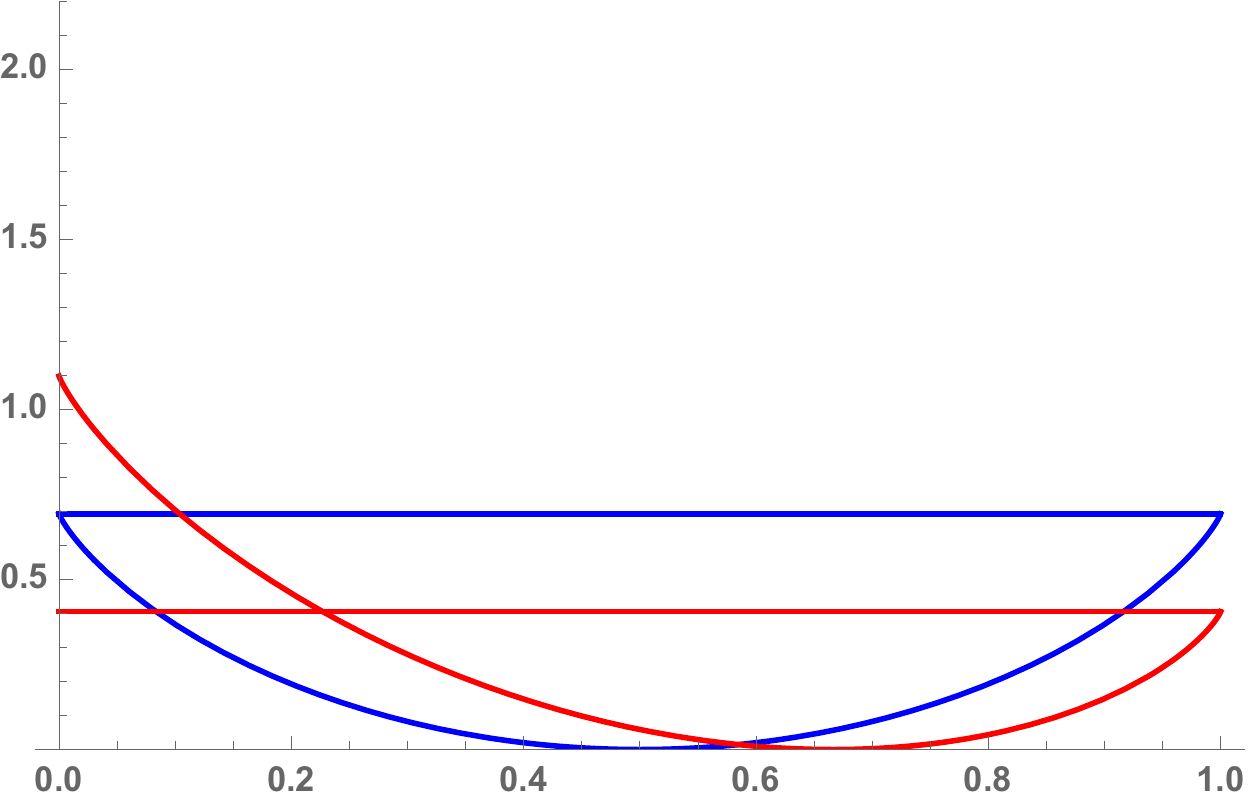}\qquad\qquad
		\includegraphics[width=6.2cm]{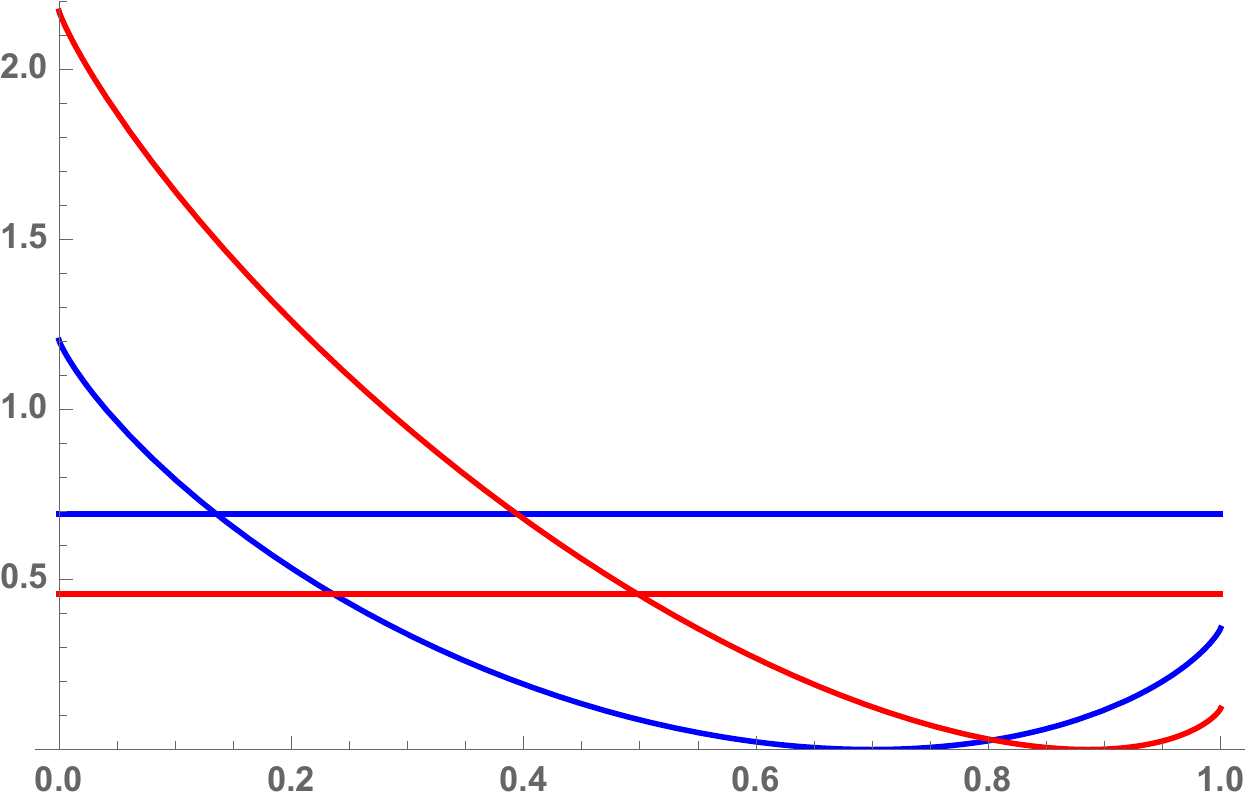}
		\caption{In this figure we present the plot of the functions $H_\proba(x)$ (in blue) and $H_{\modproba}(x)$ (in red) for different values of $\proba$. The height of the horizontal lines is $\log(2)$ for the blue line and $\log(1+\alpha)$ for the red one. The value of $\proba$ in the different plots is (from top left to bottom right) $\proba=\frac{1}{5},1-\frac{\sqrt{2}}{2},\frac{1}{2},\frac{7}{10}$.}\label{fig2}
	\end{figure}
	Notice that by choosing $\proba=1/2$, the game $\game_{\nplayers}$ is a drawn uniformly from the space of games with $\nplayers$ players and binary payoffs. In other words, claiming that some property holds with probability approaching $1$ in the model in \cref{pr:binary} with $\proba=1/2$, is equivalent to claim that the fraction of games with binary payoffs sharing that property approaches $1$ as $\nplayers$ grows to infinity. Therefore, choosing $\proba=1/2$, we can rephrase \cref{pr:binary} as a \emph{counting problem} and obtain the following result.
	\begin{corollary}\label{corollary}
		Let $\cG_{\nplayers}$ be the set of all possible distinct games with  $\nplayers$ players and binary payoff. For $\varepsilon>0$ let
		\begin{equation*}
		\widetilde{\cG}_{\nplayers,\varepsilon}\coloneq\left\{\game_{\nplayers}\in\cG_{\nplayers}\::\: \WEq(\game_{\nplayers})\in[x_{\weq}(1/2)-\varepsilon,x_{\weq}(1/2) +\varepsilon ],\:\SO(\game_{\nplayers}),\BEq(\game_{\nplayers})\in[1-\varepsilon,1] \right\},
		\end{equation*}
		that is, the subset of games $\game_{\nplayers}$ with $\SO$, $\BEq$ and $\WEq$ at most $\varepsilon$ far from $x_{\opt}(1/2)$, $x_{\beq}(1/2)$ and $x_{\weq}(1/2)$.	Then,
		\begin{equation*}
		\forall\varepsilon>0,\quad\lim_{\nplayers\to\infty}\frac{|\widetilde{\cG}_{\nplayers,\varepsilon}|}{|\cG_{\nplayers}|}=1.
		\end{equation*}
	\end{corollary}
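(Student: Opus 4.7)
The plan is to reduce the counting statement to the probabilistic convergence already proved in \cref{pr:binary}. The critical observation is that when $\proba=1/2$, the Bernoulli payoffs $\pay_{\play}(\actprof)$ are i.i.d.\ uniform on $\braces{0,1}$. A game in $\cG_{\nplayers}$ is uniquely determined by the $\nplayers\cdot 2^{\nplayers}$ payoff values, one for each (player, profile) pair, so the probability measure on $\cG_{\nplayers}$ induced by drawing the payoffs i.i.d.\ Bernoulli$(1/2)$ assigns the common probability $2^{-\nplayers\cdot 2^{\nplayers}}=1/|\cG_{\nplayers}|$ to every element of $\cG_{\nplayers}$. In other words, the law of $\game_{\nplayers}$ in our model with $\proba=1/2$ is the uniform distribution on $\cG_{\nplayers}$, so probabilities equal counting ratios exactly:
\begin{equation*}
\Prob\parens*{\game_{\nplayers}\in A}=\frac{|A|}{|\cG_{\nplayers}|}\qquad\text{for every }A\subseteq\cG_{\nplayers}.
\end{equation*}

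Next I would invoke \cref{pr:binary} at $\proba=1/2$. The proposition gives the joint convergence in probability
\begin{equation*}
\big(\SO(\game_{\nplayers}),\BEq(\game_{\nplayers}),\WEq(\game_{\nplayers})\big)\overset{\Prob}{\longrightarrow}\big(x_{\opt}(1/2),x_{\beq}(1/2),x_{\weq}(1/2)\big),
\end{equation*}
and its proof identifies $x_{\opt}(1/2)=x_{\beq}(1/2)=1$. Because the payoffs lie in $\braces{0,1}$, we deterministically have $\BEq(\game_{\nplayers})\le\SO(\game_{\nplayers})\le 1$, so asking that $\SO$ and $\BEq$ fall in $[1-\varepsilon,1]$ is the same as asking that they be within $\varepsilon$ of $1$. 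Combining the three coordinates yields
\begin{equation*}
\Prob\parens*{\game_{\nplayers}\in\widetilde{\cG}_{\nplayers,\varepsilon}}\longrightarrow 1\qquad\text{for every }\varepsilon>0.
\end{equation*}

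Finally, transferring this probability into a counting ratio via the identification from the first step gives
\begin{equation*}
\frac{|\widetilde{\cG}_{\nplayers,\varepsilon}|}{|\cG_{\nplayers}|}=\Prob\parens*{\game_{\nplayers}\in\widetilde{\cG}_{\nplayers,\varepsilon}}\longrightarrow 1,
\end{equation*}
which is the claim. I do not foresee a substantive obstacle: the whole argument is a bookkeeping step that rests on the equivalence between uniform sampling on $\cG_{\nplayers}$ and i.i.d.\ fair-coin payoffs, together with the convergences already supplied by \cref{pr:binary}. The only point requiring a moment of care is the observation that $\SO\le 1$ almost surely, which is what makes the one-sided interval $[1-\varepsilon,1]$ in the definition of $\widetilde{\cG}_{\nplayers,\varepsilon}$ the correct substitute for a symmetric neighborhood of the limit value $1$.
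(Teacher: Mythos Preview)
Your proposal is correct and follows exactly the approach the paper takes: the paragraph preceding the corollary explains that choosing $\proba=1/2$ makes $\game_{\nplayers}$ uniformly distributed on $\cG_{\nplayers}$, so the probabilistic convergence in \cref{pr:binary} is equivalent to the counting statement. Your added remark that $\SO\le 1$ deterministically (justifying the one-sided interval $[1-\varepsilon,1]$) is a small clarification the paper leaves implicit.
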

	\noindent Roughly, \cref{corollary} states that asymptotically almost every binary game $\game_{\nplayers}$ has
	\begin{equation}\label{eq:approx-p}
	(\SO(\game_{\nplayers}),\BEq(\game_{\nplayers}),\WEq(\game_{\nplayers}))\approx (1,1,0.2271),
	\end{equation}
	where the approximation $x_{\weq}(1/2)\approx 0.2271$ can be obtained numerically from the definition of $x_{\weq}$ in \cref{eq:xweq-p}. In the language of \ac{PoA}/\ac{PoS}, the claim of \cref{corollary} can be rephrased as follows: when the number of players grows to infinity for all but a vanishingly small fraction of games with binary payoffs it holds 
	\begin{equation*}
	\PoS(\game_{\nplayers})=\frac{\SO(\game_{\nplayers})}{\BEq(\game_{\nplayers})}\approx 1\quad\text{and}\quad \PoA(\game_{\nplayers})=\frac{\SO(\game_{\nplayers})}{\WEq(\game_{\nplayers})}\approx\frac{1}{0.2271}\approx4.4034.
	\end{equation*}
	
%
	
\bigskip
\subsection*{Acknowledgments}
Both authors are members of GNAMPA-INdAM and of COST Action GAMENET. This work was partially supported by the GNAMPA-INdAM Project 2020 ``Random walks on random games'' and PRIN 2017 project ALGADIMAR.
\bigskip

\bibliographystyle{apalike}
\bibliography{../bibtex/bibrandomPoA}

\end{document}